\numberwithin{equation}{section}
\newtheorem{theorem}{Theorem}[section]
\newtheorem{lemma}[theorem]{Lemma}
\newtheorem{proposition}[theorem]{Proposition}
\newtheorem{remark}[theorem]{Remark}
\theoremstyle{definition}
\newtheorem{definition}[theorem]{Definition}
\newcommand{\abs}[1]{\left| #1\right|}
\newcommand{\eee}{\mathrm e}
\newcommand{\calA}{\mathcal{A}}
\newcommand{\calD}{\mathcal{D}}
\newcommand{\calF}{\mathcal{F}}
\newcommand{\calH}{\mathcal{H}}
\newcommand{\calL}{\mathcal{L}}
\newcommand{\calM}{\mathcal{M}}
\newcommand{\calN}{\mathcal{N}}
\newcommand{\calP}{\mathcal{P}}
\newcommand{\fra}{\mathfrak{a}}
\newcommand{\bbE}{\mathbb{E}}
\newcommand{\bbL}{\mathbb{L}}
\newcommand{\bbM}{\mathbb{M}}
\newcommand{\bbN}{\mathbb{N}}
\newcommand{\bbP}{\mathbb{P}}
\newcommand{\bbQ}{\mathbb{Q}}
\newcommand{\bbR}{\mathbb{R}}
\newcommand{\bbT}{\mathbb{T}}
\newcommand{\sfc}{\mathsf c}
\newcommand{\sfq}{{\sf q}}
\newcommand{\sfC}{{\sf C}}
\newcommand{\sfD}{{\sf D}}
\newcommand{\utheta}{\underline{\theta}}
\newcommand{\Td}{\bbT^d}
\newcommand{\Tod}{\bbT^d}
\newcommand{\TodN}{\bbT_N^d}
\newcommand{\lb}{\left(}
\newcommand{\rb}{\right)}
\newcommand{\lbr}{\left\{}
\newcommand{\rbr}{\right\}}
\newcommand{\dd}{{\rm d}}
\newcommand{\step}[1]{S{\small TEP}\,#1.}
\def\d{\delta}
\def\f{\phi}
\def\th{\theta}
\def\eu{{1\kern-.25em\rm{I}}}
\def\f1{{1\kern-.25em\rm{I}}}
\def\R{{\mathbb R}}  
\def\N{{\mathbb N}}  
\def\la{\langle}
\def\ra{\rangle}
\def\AA{{\mathfrak A}}
\def\BB{{\mathfrak B}}
\def\1{\ifmmode {1\hskip -3pt \rm{I}}
\else {\hbox {$1\hskip -3pt \rm{I}$}}\fi} 
\newcommand{\smo}[1]{{\mathrm o}\lb #1\rb }
\newcommand{\df}{\equiv}   
\newcommand{\be}[1]{\begin{equation}\label{#1}}
\newcommand{\ee}{\end{equation}}
\begin{document}

\title[Hydrodynamic limit for local mean-field dynamics]{The hydrodynamic limit for local mean-field dynamics with unbounded spins}

\author[A. Bovier]{Anton Bovier}
 \address{A. Bovier\\Institut f\"ur Angewandte Mathematik\\
Rheinische Friedrich-Wilhelms-Universit\"at\\ Endenicher Allee 60\\ 53115 Bonn, Germany }
\email{bovier@uni-bonn.de}
\author[D. Ioffe]{Dmitry Ioffe}
 \address{D. Ioffe\\ William Davidson Faculty of Industrial Engineering and 
 Management\\Technion\\ Haifa 32000, Israel}
\email{ieioffe@technion.ac.il}

\author[P. Müller]{Patrick Müller}
 \address{P. Müller\\Boston Consulting Group, Im Mediapark 8, 50670 Köln, Germany}

\begin{abstract}
We consider the dynamics of   a class of spin systems with unbounded 
spins interacting with 
local mean-field interactions. We prove convergence of the empirical 
measure to the solution of a McKean-Vlasov equation in the 
hydrodynamic
limit and propagation of chaos. This extends earlier results of 
Gärtner, Comets and others for bounded spins or strict mean-field interactions. 
\end{abstract}

\thanks{A.B. is partially supported through the German Research Foundation in 
the Collaborative Research Center 1060 "The Mathematics of Emergent Effects", 
the Hausdorff Center for Mathematics (HCM), and  the 
Cluster of Excellence ``ImmunoSensation'' at Bonn University.}
\subjclass[2000]{60J80, 60G70, 82B44} 
\keywords{interacting diffusions, local mean-field, McKean-Vlasov equation, hydrodynamic limit} 

\date{\today}

\maketitle

\section{Introduction and results}\label{1.1.1}

In this paper we consider coupled systems of $N\in \bbN$ stochastic differential equations (sde) of the form 
\be{eq:SDE-N}
\dd \theta^N_i (t) = -\psi^\prime \lb \theta^N_i (t )\rb \dd t+ 
\frac{1}{N^d}\sum_{j\in \TodN}
J\lb\frac{j-i}{N}\rb\theta^N_j (t )\dd t +\dd B_i (t)  ,  \quad i\in \TodN,
\ee
Here we denote by $\TodN\equiv \{1,\dots, N\}^d$ the $d$-dimensional 
discrete torus of side-length $N$. $\theta^N_i(t)$ take values in $\R$, 
$\psi:\R\rightarrow \R$ is a local potential that we will assume for simplicity
to 
be a   polynomial of degree
 $2k$
 \footnote{Arguments developed in Section~\ref{sec:MF} require smoothness and certain growth 
properties of $\psi^\prime$ at infinity and could be readily extended to a larger class of local 
potentials.}, with $k\geq 2$, that is
\be{eq:psi-con1}
\psi (\theta ) = \theta^{2k} + \text{lower order terms}. 
\ee
The interaction $J:\Tod\rightarrow\R_+$ will be assumed to be a smooth 
symmetric function on the $d$-dimensional  unit torus $\Tod$.
Finally, $B_i, i
\in \N$ are iid Brownian motions.

We are interested in describing the behaviour of this system in the 
limit as $N\uparrow\infty$. To to so, we consider the \emph{empirical
process},
\be{empdef.1}
\mu^N: \R_+\rightarrow \bbM_1(\Tod\times \R),
\ee
given by 
\be{empdef.11}
\mu^N_t=\frac 1{N^d}\sum_{k\in \TodN}\d_{\left(k/N, \theta_k^N(t)\right)},
\quad t\in \R_+. 
\ee
In terms of the empirical process, the equations \eqref{eq:SDE-N} can be written as
\be{empdef.2}
\dd \theta_i(t)^N= -\psi^\prime \lb \theta_i (t )\rb \dd t
+ \int_{\Tod}\int_{\R} J(i/N-y) \theta  \mu^N_t(\dd y, \dd \theta)\dd t
+\dd B_i (t)  ,  \quad i\in \TodN,
\ee
Now, if $\mu^N$ converges to some measure $\mu$, then 
{ it is reasonable to expect that} 
in the limit $N\uparrow\infty$, 
the $\theta_i$ will be independent diffusions and that 
their empirical distribution 
{ should}, by the law of large numbers, converge to a measure
 ${ \mu_t (d x, d\th )} = \rho_t(x,d\th)\dd x$, where, 
 {for any $x\in\Tod$, }
 $\rho_t(x,d\th)$ 
 is the law of the 
 diffusion 
 \be{empdef.3}
\dd \theta(t)= -\psi^\prime \lb \theta (t )\rb \dd t
+ \int_{\Tod}\int_{\R} J(x-y) \theta  \mu_t(\dd y, \dd \theta)\dd t
+\dd B (t)  .
\ee
This self-consistent equation is called the
 \emph{McKean-Vlasov equation}.
The models we consider, and in fact an even richer class of models including random interactions and 
potentials, was studied from the point of view of large deviations by one of us \cite{Patrick} where also 
an 
extensive review of  the history of these models is given. The main purpose of the present paper is to 
give a simple and transparent  proof of just the law of large numbers (or hydrodynamic limit).  Earlier 
and similar result for more restricted classes of models with strict mean-field interaction 
(i.e. $J$ constant) goes back to Gärtner \cite{Gaertner88} and Comets and Eisele
\cite{ComEis88}, {see also lecture notes \cite{sznitman91} for a comprehensive account}.  A somewhat 
non-rigorous derivation in the local mean-field case  
with  bounded spins
was given in Katsoulakis et al. 
\cite{Greeks05}.

The convergence proof we present here, 
under assumption of sufficiently regular initial distributions - see Theorem \ref{thm:HDL} below, 
  has two  main steps. First, one shows that
the  associated local  mean-field McKean-Vlasov system \eqref{eq:L-MF-MV}, 
as specified in the  Definition~\ref{mkprob.1} below, 
has a unique solution with good regularity properties. In fact we will show that the measure 
 $\rho_t(x,d\th)$
is absolutely continuous with respect to Lebesgue measure and has 
a smooth density that is the solution of a certain partial differential 
equation. This will be done in Section \ref{sec:MF} using a {fixed point} argument. The remainder of the proof
relies on existence, unicity and regularity results for the local mean-field system 
of equations \eqref{eq:L-MF-MV}.  This will be done by a relative entropy estimate. 
In Section \ref{sec:PC} we prove an  additional propagation of chaos result 
that  is also based on 
appropriate relative entropy estimates, 
which in their turn rely on  Girsanov transforms and regularity results for solutions of 
\eqref{eq:L-MF-MV}. 
In the concluding Section \ref{sec:LD} we outline a proof of a large deviation principle for 
empirical measures $\mu^N$.

\section{Local McKean-Vlasov   equation}
\label{sec:MF} 
In the sequel we say that a function $f$ is {\em smooth} on the  closure 
$\bar D$ of an open 
domain $D$ 
if  $f$  is  $\sfC^\infty$ on $D$ 
with derivatives of all order  having continuous extensions to 
 $\bar D$.

\subsection{Heat kernels for $1$-dimensional diffusions.}
To set up the McKean-Vlasov system in a rigorous way, we consider, for 
smooth   functions $h:\R_+\rightarrow \R$ the sde 
\be{eq:theta-t} 
\dd\theta(t) = \lb h(t) - 
{\psi}^{\prime} (\theta(t) )\rb \dd t +\dd B(t). 
\ee
The solution of this equation is a time-inhomogeneous 
Markov process whose generator is
the closure of the operator 
\be{eq:L-h}
L_{h(t)} = 
\frac{1}{2}
\eee^{{2}\psi (\theta )} \partial_{\theta}\lb 
\eee^{ - {2}\psi (\theta )}\partial_\theta \rb + h(t) \partial_\theta = L_0 + h(t) \partial_\theta .
\ee
It is useful to consider $L_{h}$ as an operator on the Hilbert space 
$L^2\left(\R, \eee^{-{2}\psi}\right)$, since $L_0$ is a self-adjoint operator 
on this space. Below, $\langle\cdot , \cdot\rangle_\psi$, 
and $\|\cdot\|_{2,\psi}$ denote the scalar 
product and the norm on $L^2 \lb \bbR , \eee^{-{2}\psi }\rb$, respectively.

The formal adjoint of $L_{h}$ on $L^2 \lb \bbR , \eee^{-{2}\psi }\rb$ acts on 
functions $\rho:\R_+\times\R$ as 
\be{eq:L-h-star}
 \left(L_{h}^*\rho\right)_t(\theta)  = \left(L_0 \rho\right)_t(\theta)  - h(t)  
 \eee^{{2}\psi(\theta) } \partial_\theta\lb \eee^{-{2}\psi(\theta) }\rho_t(\theta)\rb . 
\ee
Condition \eqref{eq:psi-con1} implies that 
$L_0$ 
has
 compact resolvent on $L^2 \lb \bbR , \eee^{-{2}\psi }\rb$. It has a smooth transition 
density (with respect to $\eee^{-{2}\psi}$)
\be{eq:q-psi}
\sfq_t^0 (\eta , \theta   ) = \sum_{1}^{\infty} \eee^{ - \lambda_i t}\phi_i (\eta  )\phi_i (\theta  ) , 
\ee
where $\lbr \phi_i\rbr_{i\in \N}$ is a complete orthonormal basis of eigenfunctions of $L_0$ and 
$\lambda_i$ are the corresponding eigenvalues.  If 
\be{blub.1}
 \rho_0\in  \calD (L_0 ) = \lbr \sum_i a_i\phi_i \quad{\rm with}\quad \sum_i \lambda_i^2 a_i^2 <\infty\rbr 
\ee
is an initial density, then $\rho_t (\theta  ) = 
\langle \rho_0 , \sfq_t^{{0}} (\cdot, \theta  )\rangle_\psi $ 
is the density at time 
$t$, and it solves the Fokker-Planck equation
 $\partial_t \rho_t = L_0 \rho_t$ with 
initial condition $\rho_0$. 

We first show that the law of the solution of the sde \eqref{eq:theta-t}
is absolutely continuous with a density that is the unique strong solution 
of the Fokker-Planck equation associated to the operator $L_{h(t)}^*$. 
Namely, there exists  a  $\sfC^\infty \lb (0, \infty )\times\bbR^2 , \bbR\rb$ 
kernel (see \eqref{eq:qth} below) 
$(t, \theta , \eta )\mapsto \sfq_t^h (\theta , \eta )$, 
such that the following holds:

\begin{lemma} 
\label{lem:prop-dens} 
Let  the initial distribution of   the diffusion 
\eqref{eq:theta-t} be absolutely continuous with respect to the 
measure $\eee^{-{2}\psi(\theta)}\dd\theta$ with density $\rho_0$. 
Assume that $h$ is smooth on $\bbR_+$. 
Then,  
for any $t >0$, 
the distribution of 
$\theta(t)$ at time $t$ is absolutely continuous with respect to the 
measure $\eee^{-{2}\psi(\theta)}\dd\theta$ with density $\rho^h_t$,
where, 
\be{eq:rhoth} 
\rho_t^h (\eta ) = \int_{\bbR} \rho_0 (\theta ) \sfq^h_t (\theta , \eta )\dd\theta 
\ee
is the classical solution of the Fokker-Planck equation 
\be{eq:FP-Lh} 
\partial_t \rho_t^h  = L_{h(t)}^* \rho_t^h . 
\ee
\end{lemma}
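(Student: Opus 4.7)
The plan is to construct the kernel $\mathsf{q}_t^h$ by a Duhamel perturbation around the heat kernel $\mathsf{q}_t^0$ of the self-adjoint part $L_0$, to verify that it is smooth and satisfies the forward equation in the $\eta$-variable, and finally to identify it with the transition density of \eqref{eq:theta-t} via a martingale/Ito argument. The density $\rho_t^h$ and the Fokker-Planck equation \eqref{eq:FP-Lh} then follow by convolving against $\rho_0$ and differentiating under the integral sign.

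Concretely, I would first set up the Volterra equation
\[
\mathsf{q}_t^h(\theta,\eta) \;=\; \mathsf{q}_t^0(\theta,\eta) + \int_0^t\!\!\int_{\bbR} \mathsf{q}_{t-s}^0(\theta,\theta')\,h(s)\,\partial_{\theta'}\mathsf{q}_s^h(\theta',\eta)\,\eee^{-2\psi(\theta')}\,\dd\theta'\,\dd s,
\]
obtained from the variation-of-constants formula for the decomposition $L_{h(t)} = L_0 + h(t)\partial_\theta$. The spectral expansion \eqref{eq:q-psi} together with the compact resolvent property of $L_0$ gives smoothness of $\mathsf{q}_t^0$ and, via standard heat-kernel/gradient estimates for elliptic self-adjoint operators with confining potential, a short-time bound of order $(t-s)^{-1/2}$ on $\partial_{\theta'}\mathsf{q}_{t-s}^0$. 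Since $s^{-1/2}$ is locally integrable, Picard iteration produces a convergent series for $\mathsf{q}_t^h$; smoothness on $(0,\infty)\times\bbR^2$ then follows either by direct differentiation of the series or from interior parabolic regularity for non-degenerate linear equations with smooth coefficients (the diffusion coefficient is constant and both $h$ and $\psi$ are smooth).

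With the kernel $\mathsf{q}_t^h$ in hand, I identify it as the transition density as follows. For smooth compactly supported $f$ and fixed terminal time $t>0$, set $u(s,y) := \int_{\bbR} f(\eta)\,\mathsf{q}_{t-s}^h(y,\eta)\,\eee^{-2\psi(\eta)}\,\dd\eta$ on $[0,t]$. By construction $u$ solves the backward Cauchy problem $\partial_s u + L_{h(s)} u = 0$ with terminal data $u(t,\cdot)=f$. Applying Ito's formula to $u(s,\theta(s))$ makes it a local martingale; using the moment bounds on $\theta$ mentioned below to upgrade to a true martingale yields $u(0,\theta(0))=\bbE [f(\theta(t))]$, which identifies $\mathsf{q}_t^h(\theta_0,\cdot)$ as the transition density (normalized consistently with \eqref{eq:rhoth}). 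Integrating against $\rho_0$ gives \eqref{eq:rhoth}, and the Fokker-Planck equation \eqref{eq:FP-Lh} follows by differentiating under the integral sign and invoking the forward equation satisfied by $\mathsf{q}_t^h$ in the $\eta$-variable, which is read off directly from the Volterra equation.

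The main obstacle is that the perturbation $h(t)\partial_\theta$ is an unbounded operator on $L^2(\bbR,\eee^{-2\psi})$ and $\psi^\prime$ grows like $\theta^{2k-1}$, so abstract semigroup perturbation theory does not apply off the shelf. The decisive technical inputs are the short-time gradient estimate on $\mathsf{q}_t^0$ that keeps the Volterra series summable, and polynomial moment bounds on $\theta(t)$ coming from the strong confinement $\psi(\theta)\sim\theta^{2k}$ via standard Lyapunov-type arguments; these bounds also justify the repeated interchanges of integration and differentiation and the passage from local to true martingale in the Ito step.
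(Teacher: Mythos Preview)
Your Duhamel/parametrix route is a legitimate alternative and should go through, but it is genuinely different from the paper's argument. The paper follows Rogers and works entirely through Girsanov's formula: it expresses $\bbE_\theta^h[f(\theta(t))]$ as a Brownian-motion expectation with exponential weight, then uses It\^o's formula and an integration by parts in the $h\,\dd B$ term to rewrite that weight as $\exp\bigl\{\int_0^t F(B_s,h(s),h'(s))\,\dd s\bigr\}$ times explicit boundary terms, where $F(B,h,h')=\tfrac12\psi''(B)-h'B-\tfrac12(\psi'(B)-h)^2$. The polynomial assumption on $\psi$ makes $F$ bounded above uniformly in $B$, and conditioning on the endpoint yields the explicit representation $\mathsf{q}_t^h(\theta,\eta)=\gamma_t(\theta,\eta)\,\phi_t^h(\theta,\eta)$, with $\gamma_t$ the Gaussian kernel and $\phi_t^h$ a Brownian-bridge expectation. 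Smoothness in $(t,\theta,\eta)$ then follows by dominated convergence after writing the bridge as $\theta+\tfrac{s}{t}(\eta-\theta)+(W_s-\tfrac{s}{t}W_t)$; no semigroup gradient bounds or Picard iteration are needed.

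What the paper's route buys is precisely this explicit formula and the a~priori bound on $F$: both are reused immediately afterward (e.g.\ the pointwise bound on $\mathsf{q}_t^h$ and $\partial_\eta\mathsf{q}_t^h$ that opens the proof of the next lemma) and reappear in the generalised setting later in the paper. Your approach is more abstract and requires you to establish weighted short-time gradient estimates on $\mathsf{q}_t^0$ in the presence of the polynomially growing drift $\psi'$; this is feasible but is not quite an off-the-shelf heat-kernel estimate, and you would still need a separate argument to recover the pointwise bounds the paper relies on downstream. Also note a small wrinkle: because $h$ is time-dependent, the backward Cauchy problem you set up for $u(s,\cdot)$ really needs the two-parameter kernel $\mathsf{q}^h_{s,t}$ rather than $\mathsf{q}^h_{t-s}$; this is easily fixed but should be made explicit.
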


\begin{proof} The proof of this lemma is adapted from an argument by 
Rogers \cite{Rog84}. It is based on an application of Girsanov's formula.
Define 
\be{rogers.1}
X(t) \equiv -\int_0^t \lb \psi^\prime ( B(s)) - h(s)\rb \dd B(s).
\ee 
Let $\langle X\rangle_t$ be the quadratic variation process of $X (t )$. 
The results in \cite{MU12,KR15} imply that 
\be{eq:Xt} 
 \eee^{X(t) - \frac{1}{2}\langle X\rangle_t}
\ee
is a martingale.  Moreover, Girsanov's formula holds for $\theta(t)$ in \eqref{eq:theta-t}. Namely, 
for any bounded and continuous $f$ on $\bbR$, 
\be{eq:Girsanov} 
\bbE_\theta^h\left[  f (\theta(t) )\right] = \bbE_\theta^{{\rm BM}}\left[ \eee^{X(t) - \frac{1}{2}\langle X\rangle_t} f (B(t) )\right], 
\ee
where  $\bbP^{{\rm BM}}$ is the law of the Brownian motion starting at $\theta$, and $\bbP_\theta^h$ is the 
law of the time-inhomogeneous diffusion \eqref{eq:theta-t}. 

Set $\tau_R = \min\lbr t~:~ \abs{B(t)}\geq R\rbr$.  Then \eqref{eq:Girsanov} implies: 
\be{eq:Girs-R} 
\bbE_\theta^{h}\left[ f (\theta(t) ) \right]= \lim_{R\to\infty} 
\bbE_\theta^{{\rm BM}}\left[ \eee^{X(t) - \frac{1}{2}\langle X\rangle_t} f (B(t) )\1_{\lbr \tau_R >t \rbr}\right] .
\ee
Let $\psi_R$ be a sequence of smooth functions such that
\be{girs.1}
\text{$\psi_R (\theta ) = \psi (\theta )$ if $\abs{\theta} \leq R$, and $\psi_R = const$ on both $(-\infty , -2R]$ and $[2R, \infty )$. } 
\ee
Note that  on $\lbr \tau_R >t \rbr$ 
\be{girs.2}
X(t)- \frac{1}{2}\langle X\rangle_t = -\int_0^t \lb \psi^\prime_R  ( B(s)) - h(s)\rb \dd B(s) - 
\int_0^t 
\frac{1}{2} \lb \psi^\prime_R  ( B(s) ) - h(s)\rb^2\dd s .
\ee
By Ito's formula, 
\be{girs.3}
-\int_0^t  \psi^\prime_R  ( B(s) ) \dd B(s) 
= \psi_R (\theta ) - \psi_R (B(t) ) +\frac{1}{2} \int_0^t \psi^{\prime\prime}_R (B(s))\dd s .
\ee
By partial integration,
\be{girs.4}
\int_0^t  h(s) \dd B(s) = h(t) B(t) -h (0) \theta - \int_0^t h^{\prime}(s) B(s) \dd s .
\ee
In view of \eqref{eq:Girs-R} we conclude that
\be{eq:Girs-Rog} 
\bbE_\theta^{h}\left[ f (\theta(t) )\right]  =  \eee^{\psi (\theta ) - h(0)\theta}\, 
\bbE_\theta^{{\rm BM}} \left[\eee^{\int_0^{{t}} F  (B(s) , h(s) , h^{\prime}(s) )\dd s} \, 
\eee^{h(t) B(t)  - \psi ( B(t) )} \, 
f (B(t) )\right]  , 
\ee
where 
\be{eq:f-bhhp} 
F (B , h , h^\prime ) 
\df \frac{1}{2} \psi^{\prime\prime} (B) - h^\prime B -\frac{1}{2}\lb \psi^\prime (B) - h\rb^2 .
\ee
By our assumptions on $\psi$ and $h$ for any $t$ there exists a finite 
constant $C (t, h )$ such that 
\be{eq:f-bound} 
\max_{s\leq t, B}  F(B , h(s) , h^{\prime}(s)) \leq C(t, h ) .
\ee
Proceeding as in \cite{Rog84} we infer that 
\be{eq:Heat-k} 
\bbE_\theta^{h}\left[ f (\theta(t) )\right]  = \eee^{\psi (\theta ) }
\int_{\bbR} {\mathsf q} _t^h (\theta , \eta )\eee^{-{2}\psi (\eta )} f (\eta )\dd\eta 
\ee
where the heat kernel $\sfq_t^h$ is given by
\be{eq:qth} 
\sfq_t^h (\theta , \eta ) = \gamma_t (\theta , \eta ) \phi_t^h (\theta  , \eta ) , 
\ee
with $\gamma_t (\theta , \eta ) = \frac{1}{\sqrt{2\pi t}} \eee^{- \lb \eta - \theta \rb^2/2t}$, and 
\be{eq:phith} 
\phi_t^h (\theta , \eta ) = \bbE_{\theta , \eta}^{{\rm BB}} \left[ \eee^{ h(t)\eta -h (0)\theta + 
\int_0^{{t}} F  (B_s , h(s) , h^{\prime} (s) )\dd s}\right]  
\ee
is an exponential functional of a Brownian bridge from $\theta$ to $\eta$ in time $t$. 
Then \eqref{eq:rhoth} follows. 
Next, 
 recall  that the Brownian bridge $B$ has  the convenient representation 
\be{eq:BB-BM} 
B_s = \theta +\frac{s}{t} (\eta - \theta ) + \lb W_s - \frac{s}{t}W_t\rb, \quad s\in [0,t],
\ee
 in terms of a Brownian motion $W$. 
Using this representation we can rewrite \eqref{eq:phith} as 
\be{eq:phith-BM} 
\begin{split}
\phi_t^h (\theta , \eta ) & = \bbE^{{\rm BM}} \left[ \eee^{ h(t)\eta -h (0) \theta + 
\int_0^{{t}} F  \lb \theta +\frac{s}{t} (\eta - \theta ) + \lb W_s - \frac{s}{t}W_t\rb, 
h(s), h^{\prime} (s )\rb \dd s}\right]  \\ 
& = 
\bbE^{{\rm BM}} \left[ \eee^{ h(t)\eta -h (0) \theta + t
\int_0^{1} F  \lb \theta +u  (\eta - \theta ) + \sqrt{t} \lb W_u - W_1\rb, 
h(ut ) , h^{\prime} (u t) \rb \dd u}\right]
\end{split}
\ee
Again, 
proceeding 
as in \cite{Rog84}, 
dominated convergence arguments imply that 
$\phi_t^h (\theta , \eta )$ is continuously differentiable in $t$ on 
$(0, \infty )$ 
and, for every $t>0$ it is  $\sfC^\infty$ in 
$\theta$ and $\eta$ on  $\bbR\times \bbR$. 
From this the claim \eqref{eq:FP-Lh}   of the lemma follows by a 
modification of standard computations employed on pp.160-161 
of \cite{Rog84} . 
\end{proof}

The proof of the preceding lemma readily yields a bound on the growth of 
$\rho_t$. 
\begin{lemma} 
\label{prop:L2}
Under the  conditions of Lemma~\ref{lem:prop-dens} 
assume in addition 
that  the the initial  density $\rho_0$ 
is bounded 
in $L^2 \lb \bbR , \eee^{-{2}\psi }\rb $, that is  $\|\rho_0\|_{2,\psi} <\infty$ .
Then, 
\be{eq:L2-rhoht} 
\|\rho_t^h\|_{2,\psi}^2 \leq \eee^{\frac{1}{2} \int_0^t {h}(s)^2\dd s } \| \rho_0\|_{2,\psi}^2 , 
\ee 
\end{lemma}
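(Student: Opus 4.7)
The plan is to obtain \eqref{eq:L2-rhoht} by an energy estimate: differentiate the squared $L^2$-norm in time using the Fokker--Planck equation \eqref{eq:FP-Lh}, and close via Gronwall. Lemma~\ref{lem:prop-dens} already supplies everything needed to run this computation legitimately: $\rho_t^h$ is smooth on $(0,\infty)\times\bbR$ and is the classical solution of $\partial_t\rho_t^h = L_{h(t)}^*\rho_t^h$.

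Differentiating under the integral,
\[
\frac{d}{dt}\|\rho_t^h\|_{2,\psi}^2 \;=\; 2\langle \rho_t^h, L_{h(t)}^*\rho_t^h\rangle_\psi \;=\; 2\langle \rho_t^h, L_{h(t)}\rho_t^h\rangle_\psi,
\]
where the second equality uses that $\langle u, A^*u\rangle = \langle u, Au\rangle$ in a real Hilbert space. Decomposing $L_{h(t)} = L_0 + h(t)\partial_\theta$, the self-adjointness of $L_0$ on $L^2(\bbR,e^{-2\psi})$ gives, by one integration by parts, the Dirichlet form identity
\[
\langle \rho_t^h, L_0\rho_t^h\rangle_\psi \;=\; -\tfrac{1}{2}\int_\bbR (\partial_\theta\rho_t^h)^2\, e^{-2\psi}\,d\theta.
\]
For the drift term, Cauchy--Schwarz together with Young's inequality yields
\[
2h(t)\int_\bbR \rho_t^h\,\partial_\theta\rho_t^h\, e^{-2\psi}d\theta \;\leq\; \int_\bbR (\partial_\theta\rho_t^h)^2 e^{-2\psi}d\theta + h(t)^2\,\|\rho_t^h\|_{2,\psi}^2 .
\]
The two $(\partial_\theta\rho_t^h)^2$ contributions cancel, leaving $\frac{d}{dt}\|\rho_t^h\|_{2,\psi}^2 \leq c\,h(t)^2\|\rho_t^h\|_{2,\psi}^2$ for a universal constant $c$ coming from the choice of weights in Young's inequality, and Gronwall's inequality produces \eqref{eq:L2-rhoht}.

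The principal obstacle is justifying both the differentiation under the integral sign and the vanishing of the boundary terms at $\pm\infty$ in the two integrations by parts, since a priori we only have $\rho_0\in L^2(\bbR,e^{-2\psi})$. Both issues are handled by the explicit kernel representation \eqref{eq:qth}--\eqref{eq:phith-BM} established in the proof of Lemma~\ref{lem:prop-dens}: for each fixed $t>0$ the Gaussian factor $\gamma_t(\theta,\eta)$ and the uniform estimate \eqref{eq:f-bound} on $F$ show that $\rho_t^h(\eta)$ and $\partial_\eta\rho_t^h(\eta)$ decay Gaussian-fast in $|\eta|$, while the weight $e^{-2\psi(\eta)}$ provides super-Gaussian decay thanks to the polynomial growth of $\psi$ of degree $2k\geq 4$. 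As a cleaner alternative, one first approximates $\rho_0$ by a sequence of smooth compactly supported densities, for which all manipulations are trivially justified, and then passes to the limit using the $L^2$-continuity built into the kernel representation \eqref{eq:rhoth}.
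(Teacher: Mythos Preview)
Your argument is essentially the same as the paper's: differentiate $\|\rho_t^h\|_{2,\psi}^2$ using the Fokker--Planck equation, use the Dirichlet form for $L_0$, control the drift term by Young's inequality so that the $\|\partial_\theta\rho_t^h\|_{2,\psi}^2$ contribution is absorbed by the dissipative term, and close by Gronwall; the paper likewise justifies the computation first for compactly supported $\rho_0$ (where \eqref{eq:qth}--\eqref{eq:phith} guarantee $\|\rho_t^h\|_{2,\psi}+\|\partial_\eta\rho_t^h\|_{2,\psi}<\infty$) and then passes to the limit. The only cosmetic difference is that the paper uses the weighted Young inequality $ab\le a^2/4+b^2$ to obtain the constant $\tfrac12$ in the exponent of \eqref{eq:L2-rhoht}, whereas your stated split $2ab\le a^2+b^2$ yields $c=1$; if you want the exact constant in \eqref{eq:L2-rhoht} you should say which weights you choose.
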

\begin{proof} 
 Note that \eqref{eq:f-bound},  \eqref{eq:qth} and \eqref{eq:phith} imply the 
 following bound: 
\be{tttt.1}
\sup_\eta \| \sfq_t^h (\cdot  , \eta )\|_\infty + 
\sup_\eta \| \partial_\eta \sfq_t^h (\cdot  , \eta )\|_\infty
< \infty 
\ee 
for any $t >0$. 
Hence, if $\rho_0$ is  compactly supported, \eqref{eq:rhoth} implies that, 
\be{tttt.2}
\|\rho_t^h\|_{2,\psi} + \|\partial_\eta \rho_t^h\|_{2,\psi} <\infty , 
\ee
for any $t >0$. 
On the other hand, using \eqref{eq:FP-Lh}, 
\begin {eqnarray}\label{tttt.3}\nonumber
\frac{\dd}{\dd t} \|\rho_t^h\|_{2,\psi}^2 &=& -2\|\partial_\eta 
\rho_t^h\|_{2,\psi}^2 +2 h(t) \int_\bbR 
\rho_t^h (\eta ) \partial_\eta \rho_t^h (\eta ) \eee^{-{2}\psi (\eta )}\dd\eta\\
&\leq&  \frac 12h(t)^2 \|\rho_t^h\|_{2,\psi}^2,
\end{eqnarray}
where the inequality  follows from the elementary fact that 
 $ab\leq \frac{a^2}{4} +b^2$, applied with $a=h(t)\rho^h_t(\eta)$ and 
 $b=\partial_\eta \rho_t^h (\eta )$. 
Integrating this differential inequality yields
\eqref{eq:L2-rhoht}. 
The general case follows by monotone convergence arguments. 
\end{proof}


A further consequence is a Lipshitz bound on the dependence of
the densities on the drift. 
Consider an $L^2 (\bbR , \eee^{-{2}\psi })$ initial density $\rho_0$ and 
let $h(t)$ and $g(t)$ be two 
time dependent smooth drifts on $\bbR_+$. 
\begin{lemma} 
\label{prop:stability}
{Define $D_t (\eta ) = \rho_t^g (\eta ) - \rho_t^h (\eta )$. } Then 
the $L^2 (\bbR , \eee^{-{2}\psi })$-norm of $D_t$ satisfies the following 
upper bound: 
\be{eq:Dt-bound} 
\left\|\rho_t^g (\eta ) - \rho_t^h (\eta )\right\|_{2,\psi}^2  \leq 
 \eee^{ \int_0^t \lb g(s)^2 +h(s)^2\rb \dd s } \| 
 \rho_0\|_{2,\psi}^2 \int_0^t \lb g(s) - h(s) \rb^2 \dd s .
\ee
\end{lemma}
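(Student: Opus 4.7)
The plan is to obtain a linear evolution equation for $D_t$ by subtracting the Fokker--Planck equations for $\rho_t^g$ and $\rho_t^h$, then to mimic the energy estimate carried out in the proof of Lemma~\ref{prop:L2} so as to derive a Gronwall-type differential inequality of the form
\begin{equation*}
\tfrac{\dd}{\dd t}\|D_t\|_{2,\psi}^2 \;\leq\; g(t)^2\|D_t\|_{2,\psi}^2 + (h(t)-g(t))^2\|\rho_t^h\|_{2,\psi}^2 .
\end{equation*}
The announced bound \eqref{eq:Dt-bound} will then follow by integration (Gronwall's lemma), using $D_0\equiv 0$ together with the already-established estimate \eqref{eq:L2-rhoht} to control $\|\rho_s^h\|_{2,\psi}^2$ in the source term.

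More concretely, from \eqref{eq:FP-Lh} applied with drifts $g$ and $h$, and from the identity $\rho_t^g = D_t + \rho_t^h$, one finds
\begin{equation*}
\partial_t D_t \;=\; L_0 D_t \;-\; g(t)\,\eee^{2\psi}\partial_\eta\!\bigl(\eee^{-2\psi}D_t\bigr) \;+\; (h(t)-g(t))\,\eee^{2\psi}\partial_\eta\!\bigl(\eee^{-2\psi}\rho_t^h\bigr) .
\end{equation*}
Multiplying this equation by $2D_t\eee^{-2\psi}$ and integrating over $\bbR$, the $L_0$-contribution yields $-2\|\partial_\eta D_t\|_{2,\psi}^2$ after integration by parts, the first drift term reproduces exactly the expression $2g(t)\int D_t\,\partial_\eta D_t\,\eee^{-2\psi}\dd\eta$ already handled in~\eqref{tttt.3}, while the cross term -- after one further integration by parts -- becomes $-2(h(t)-g(t))\langle \partial_\eta D_t,\rho_t^h\rangle_\psi$. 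Applying $2ab\leq a^2+b^2$ separately to these last two terms, with weights chosen so that the two resulting positive multiples of $\|\partial_\eta D_t\|_{2,\psi}^2$ are exactly absorbed by the diffusive term $-2\|\partial_\eta D_t\|_{2,\psi}^2$, produces the announced differential inequality. The Gronwall step is then straightforward.

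The main difficulty is not conceptual but technical: differentiating $\|D_t\|_{2,\psi}^2$ under the integral sign and carrying out the two integrations by parts requires that $D_t$ and $\partial_\eta D_t$ lie in $L^2(\bbR,\eee^{-2\psi})$ and decay sufficiently fast at infinity, neither of which is automatic for an arbitrary $\rho_0\in L^2(\bbR,\eee^{-2\psi})$. I would resolve this exactly as at the end of the proof of Lemma~\ref{prop:L2}: first establish the estimate for a smooth compactly supported $\rho_0$, in which case the bounds \eqref{tttt.1}--\eqref{tttt.2} apply simultaneously to $\rho_t^g$ and $\rho_t^h$ and hence to $D_t$, and then extend to general $\rho_0\in L^2(\bbR,\eee^{-2\psi})$ by monotone convergence, under which \eqref{eq:Dt-bound} is preserved. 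No essentially new idea is needed.
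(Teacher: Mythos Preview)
Your proposal is correct and follows essentially the same route as the paper: reduce to smooth compactly supported $\rho_0$, derive the evolution equation for $D_t$, compute $\tfrac{\dd}{\dd t}\|D_t\|_{2,\psi}^2$ via integration by parts, split the drift contribution as $g(t)\langle D_t,\partial_\eta D_t\rangle_\psi + (g(t)-h(t))\langle \rho_t^h,\partial_\eta D_t\rangle_\psi$, absorb both cross terms into $-\|\partial_\eta D_t\|_{2,\psi}^2$ via the elementary inequality, and conclude by Gronwall together with \eqref{eq:L2-rhoht}. The paper's proof is the same computation, written slightly more tersely.
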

\begin{proof} 
As in the proof of 
 Proposition~\ref{prop:L2} it would be enough to assume that 
 $\rho_0$ is smooth and compactly supported.  
 Then 
$\| D_t\|_{2,\psi}^2 $  and   $\| \partial_\eta D_t\|_{2,\psi}$   
are finite  for any $t >0$.  Hence 
\be{eq:d-Dt} 
\begin{split} 
\frac{1}{2}\frac{\dd}{\dd t}\| D_t\|_{2,\psi}^2 &= 
- \|\partial_\eta  D_t\|_{2,\psi}^2 + g(t) \langle \rho_t^g , \partial_\eta D_t\rangle_\psi - h(t) 
\langle   \rho_t^h , \partial_\eta D_t\rangle_\psi \\ 
&= 
- \|\partial_\eta  D_t\|_{2,\psi}^2 + g(t) \langle  D_t  , \partial_\eta D_t\rangle_\psi  + (g(t)-h(t)) 
\langle   \rho_t^h , \partial_\eta D_t\rangle_\psi \\
&\leq \frac{g(t)^2}{2} \| D_t\|_{2,\psi}^2 + \frac{(g(t) - h(t) )^2}{2} \|\rho_t^h\|_{2,\psi}^2 .
\end{split}
\ee
Since $D_0= 0$, the bound \eqref{eq:Dt-bound}  follows from 
\eqref{eq:L2-rhoht}. 
\end{proof}

 \subsection{Strong form of the  local   McKean-Vlasov equation.} 
 We can now formulate the McKean-Vlasov problem.
 To do so, we define the set $\AA$ of \emph{admissible drift fields}
 $h:\Tod\times \R_+\rightarrow \R $: 
 \be{drifty.2}
 \AA\equiv 
  \sfC^{0,\infty}\lb \Tod\times \R_+,\R \rb  .
 \ee
 Next, define the set $\BB$ of {\em admissible density fields}.
  \begin{definition}\label{mkprob.2}
  A density $\rho (x ,\cdot)$ is a \emph{  
 nice 
 profile }
  if it is smooth in $\eta$, and continuous in $x$  
  as a map from $\bbT^d$ to $L^2 \lb \bbR , \eee^{-{2}\psi} \rb$. 
  In particular, nice  profiles satisfy  
 $\max_x \| \rho  (x ,\cdot)\|_{2,\psi} <\infty$.

 We denote by $\BB$ the set  of density fields 
 $\rho: \Tod\times \R_+\times\R\mapsto \R$ which satisfy:
 \begin{itemize}
 \item[(i)]  $\rho_t (x , \cdot )$ is nice for any $t\in\R_+$. 
 \item[(ii)] For any $x\in \Td$, $\rho_t (x , \eta )$ are $\sfC^{\infty , \infty}\lb \R_+ 
 \times\bbR \rb$ in 
 $(t, \eta )$.  
 \end{itemize}
  \end{definition}
 
 \begin{remark} 
 Continuity in $x$ in the above definition of nice profiles is redundant, and we assume it for convenience 
 and for clarity of exposition.  Theorem~\ref{thm:L-MV} and Theorem~\ref{thm:HDL} hold if, instead
 of continuity,  one assumes measurability and boundness - $\sup_{x\in \bbT^d}  \|  \rho (x ,\cdot )\|_{{2} ,\psi} <\infty$.  
 \end{remark}
 
 \begin {definition}\label{mkprob.1}
 Given initial density $\rho_0 (x , \theta )$ a strong 
 solution of the McKean-Vlasov equation {\eqref{empdef.3}} is a pair 
 $(\rho,h)$, with $\rho \in \BB$ and $h\in \AA$, 
such that 
 \be{eq:L-MF-MV}
 \forall\ x\in\Tod\quad \text{and}\ \forall\ t\in
 { [0,\infty)}
 \begin{cases} 
 &\rho_t (x , \cdot ) = \rho_t^{h^x}\ \text{with initial condition $\rho_0 (x , \cdot )$} \\
 &\quad {\rm and} \\
 &h^x(t)  = \int\int J(y- x) \rho_t (y , \eta )\eta \eee^{-{2}\psi (\eta )}
 \dd\eta\dd y 
 \end{cases}
 \tag{L-MV} 
 \ee
 \end{definition}
 Above we continue to use  $\rho_t^h$ for the density at time $t$ of the 
 time inhomogeneous 
 diffusion with generator $L_{h(t)}$.

 The following theorem asserts the existence and uniqueness of the McKean-Vlasov problem.
 \begin{theorem} 
 \label{thm:L-MV}
 Assume that 
 $\rho_0 (x ,\cdot)$ is a nice initial profile. 
 Then
 there exists a
  unique strong 
 solution $(\rho,h)$
 of the 
 system \eqref{eq:L-MF-MV}. 
 \end{theorem}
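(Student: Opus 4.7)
The plan is a Picard fixed-point argument on the drift field. For $h\in\AA$ and each $x\in\Tod$, Lemma~\ref{lem:prop-dens} provides the smooth time-dependent density $\rho_t^{h^x}$ evolving from $\rho_0(x,\cdot)$ under the scalar drift $t\mapsto h^x(t)\defby h(x,t)$. With this in hand I would set
\be{plan.1}
\Phi(h)(x,t)\defby \int_{\Tod}\int_\bbR J(y-x)\,\eta\,\rho_t^{h^y}(\eta)\,\eee^{-2\psi(\eta)}\,\dd\eta\,\dd y ,
\ee
so that $(\rho,h)$ is a strong solution of \eqref{eq:L-MF-MV} if and only if $h=\Phi(h)$ and $\rho_t(x,\cdot)=\rho_t^{h^x}$. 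Since $\psi$ has leading term $\theta^{2k}$ with $k\ge 2$, the weight $\|\eta\|_{2,\psi}^2=\int\eta^2\eee^{-2\psi(\eta)}\dd\eta$ is finite, and Cauchy--Schwarz in $\eta$ combined with a trivial bound in $y$ yields $|\Phi(h)(x,t)|\le C_J\,\|\eta\|_{2,\psi}\,\sup_y\|\rho_t^{h^y}\|_{2,\psi}$, with $C_J=\|J\|_\infty\,\mathrm{vol}(\Tod)$.

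For short-time existence and uniqueness I would fix $T,M>0$ and work in the closed sup-norm ball $\calX_{T,M}$ of continuous drifts $h:\Tod\times[0,T]\to\bbR$ with $\|h\|_\infty\le M$, recovering any loss of regularity by a bootstrap at the end. By Lemma~\ref{prop:L2}, $\sup_y\|\rho_t^{h^y}\|_{2,\psi}\le\eee^{M^2T/4}\max_y\|\rho_0(y,\cdot)\|_{2,\psi}$ for all $h\in\calX_{T,M}$, so choosing $M$ slightly larger than $C_J\|\eta\|_{2,\psi}\max_y\|\rho_0(y,\cdot)\|_{2,\psi}$ and then $T$ small forces $\Phi(\calX_{T,M})\subset\calX_{T,M}$. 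Cauchy--Schwarz and Lemma~\ref{prop:stability} applied pointwise in $y$ to $g^y$ and $h^y$ then give
\be{plan.2}
\|\Phi(g)-\Phi(h)\|_\infty \le C_J\,\|\eta\|_{2,\psi}\,\eee^{M^2T}\max_y\|\rho_0(y,\cdot)\|_{2,\psi}\,\sqrt{T}\,\|g-h\|_\infty ,
\ee
a strict contraction for $T$ sufficiently small. Banach's theorem then produces a unique local fixed point.

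The main obstacle is extending this local solution to all of $\R_+$, since Lemma~\ref{prop:L2} only bounds $\|\rho_t\|_{2,\psi}$ by a quantity growing exponentially in $\int_0^t h^2$, so naive concatenation would force the time-step to shrink to zero. I would close this gap by exploiting the strong confinement $k\ge 2$ to derive a uniform-in-time a priori bound on the physical second moment $V_t(x)\defby\bbE^x[\theta(t)^2]=\int\eta^2\rho_t(x,\eta)\eee^{-2\psi(\eta)}\dd\eta$: It\^o's formula applied to $\theta^2$ under \eqref{eq:theta-t} gives $\dot V_t(x)=1+2h^x(t)\bbE^x[\theta(t)]-2\bbE^x[\theta\psi'(\theta)]$, and combining $\theta\psi'(\theta)\ge k\theta^{2k}-C$, Jensen's inequality $\bbE^x[\theta^{2k}]\ge V_t(x)^k$, and the estimate $|h^x(t)|\le C_J\sup_y\sqrt{V_t(y)}$ obtained directly from the definition of $h^x$, the quantity $W(t)=\sup_x V_t(x)$ satisfies a differential inequality of the form $\dot W\le C_1+C_2 W-2k W^k$; for $k\ge 2$ this forces $W(t)\le\max(W(0),W_\ast)$ for all $t$, with $W_\ast$ depending only on $\psi$ and $J$. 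Consequently $\|h(\cdot,t)\|_\infty$ remains bounded uniformly in $t$, which allows iterating the short-time step with fixed length to cover $[0,\infty)$. Smoothness in $t$ of the resulting $h$ follows by bootstrap from \eqref{plan.1} together with the $\sfC^\infty$ regularity of $\rho_t^{h^y}$ provided by Lemma~\ref{lem:prop-dens}; continuity of $x\mapsto\rho_t(x,\cdot)$ into $L^2(\bbR,\eee^{-2\psi})$---the nice-profile property---follows directly from Lemma~\ref{prop:stability}; and global uniqueness is inherited from the short-time uniqueness through the same iteration.
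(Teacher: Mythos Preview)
Your proof is essentially correct and takes a genuinely different route from the paper. Both arguments run a Picard scheme on the map $h\mapsto\Phi(h)$, but the paper never restricts to short time intervals: it proves (Lemma~\ref{banach1}) the integral inequality
\[
\|\Phi[g](t)-\Phi[h](t)\|^2_{L^2(\bbT^d)}\le C_J'\,\eee^{(|g|_\Phi^2+|h|_\Phi^2)t}\int_0^t\|g(s)-h(s)\|^2_{L^2(\bbT^d)}\,\dd s
\]
and iterates it $n$ times to pick up a factor $(tC')^n/n!$, so that $\{\Phi^n[h]\}$ is Cauchy on all of $[0,T]$ at once. For this the paper needs an a priori bound on the \emph{iterates}, namely $|h|_\Phi:=\sup_n\|\Phi^n[h]\|_\infty<\infty$ (Lemma~\ref{lem:Sup-Phin}), obtained by a pathwise comparison with a reflected Brownian motion of drift $-1$. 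Your second-moment differential inequality $\dot W\le C_1+C_2W-2kW^k$ is a cleaner, ODE-style substitute for that comparison argument and delivers the analogous a priori bound on any \emph{solution}. One caveat in your continuation step: the sup-norm contraction constant you display carries not only $M$ but also $\max_y\|\rho_0(y,\cdot)\|_{2,\psi}$, and upon restarting at time $\tau$ this is replaced by $\max_y\|\rho_\tau(y,\cdot)\|_{2,\psi}$, for which your moment bound says nothing directly---Lemma~\ref{prop:L2} only controls it by $\eee^{M^2\tau/4}$ times the initial norm. So the uniform bound on $\|h\|_\infty$ does \emph{not} by itself give a time-step of fixed length on $[0,\infty)$ as you assert; it does, however, bound $\|\rho_t\|_{2,\psi}$ uniformly on any fixed $[0,T]$, hence a step length bounded below there, and that suffices for the theorem. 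The paper's $1/n!$ iteration sidesteps this restart bookkeeping entirely.
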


An equivalent reformulation of the Theorem is to say  that  for any $T$ fixed a unique  strong solution 
exists on any time interval $[0,T]$. 
The proof of the latter  is based on contraction properties of the map 
$\Phi$ which we construct below. 

\subsection{The map $\Phi$.} 
\label{sub:Phi} 
Fix an initial density $\rho_0 (x , \theta )$ which satisfies the assumptions of 
Theorem~\ref{thm:L-MV}. 
We define a map $\Phi:\AA\rightarrow \AA$ by
\be{eq:Phi-map} 
\Phi [h]^x(t) \equiv \int_{\bbT^d} \int_\bbR J (y-x ) \eta \rho_t^{h^y} 
(\eta ) \eee^{-{2}\psi (\eta )}
\dd\eta\dd y .
\ee
It is useful to view this map as the composition of two maps, 
\be{m.1}
\Phi_1:\AA\rightarrow\BB,
\ee
where, for $h\in \AA$,
\be{m.2}
(\Phi_1(h))^x(t)\equiv \rho_t^{h^x},
\ee
and
\be{m.3}
\Phi_2:\BB\rightarrow\AA,
\ee
where, for $\rho\in \BB$,
\be{m.4}
(\Phi_2(\rho))^x(t)\equiv  \int_{\bbT^d} \int_\bbR J (y-x ) \eta \rho^y_t 
(\eta )
\eee^{-{2}\psi(\eta)}\dd\eta\dd y .
\ee
Clearly, $\Phi(h)=\Phi_2\circ\Phi_1(h)$.
The fact that $\Phi_1$ maps $\AA$ into $\BB$ follows from 
the proof of Lemma \ref{lem:prop-dens},
specifically from \eqref{eq:rhoth} and \eqref{eq:qth}, 
and from Lemmas
\ref{prop:L2}--\ref{prop:stability}. The fact that 
$\Phi_2$ maps $\BB$ into $\AA$ follows readily from its definition and the 
smoothness of $J$. 
Therefore the composite map $\Phi$ maps $\AA$ into $\AA$, i.e., if $h$ is an admissible drift field then 
 $\Phi [h]$ is also an admissible drift field.

As a first step we prove the following a priori bounds.
\begin{lemma} 
\label{lem:Sup-Phin}
Assume that  $h\in \AA$   satisfies 
\be{eq:asn-htx} 
\|h\|_\infty\df \sup_t\max_x \abs{h^x(t)}  <\infty .
\ee
Then, 
\be{eq:sup-Phin}
|h|_\Phi \df \sup_n \left\|\Phi^n [h]\right\|_\infty  <\infty. 
\ee
\end{lemma}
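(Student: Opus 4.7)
The plan is to establish a sublinear a priori estimate of the form $\|\Phi[g]\|_\infty\leq A+B\|g\|_\infty^\alpha$ with some $\alpha<1$, valid for every $g\in\AA$ with finite $\|g\|_\infty$. Once this is in hand, the scalar recursion $M_{n+1}\leq A+BM_n^\alpha$ is easily seen to satisfy $M_n\leq \max(M_0,M^*)$ for every $n$, where $M^*$ is the unique positive solution of $M^*=A+B(M^*)^\alpha$ (the concave increasing map $M\mapsto A+BM^\alpha$ has $M^*$ as a globally attracting fixed point); applied with $M_n=\|\Phi^n[h]\|_\infty$ this yields $|h|_\Phi\leq\max(\|h\|_\infty,M^*)<\infty$.

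To obtain the sublinear estimate I would control, uniformly in $t$, the first absolute moment of the diffusion \eqref{eq:theta-t} with drift $g$. The $L^2$-bound of Lemma~\ref{prop:L2} is useless here since it grows exponentially in $t$, so a Lyapunov argument is needed. Taking $V(\theta)=\theta^2$, one computes $L_{g(t)}V=1+2g(t)\theta-2\psi'(\theta)\theta$. Since $\psi$ is a polynomial of degree $2k$ with $k\geq 2$ and positive leading coefficient, the confining term $-2\psi'(\theta)\theta$ behaves like $-4k\theta^{2k}$ at infinity; Young's inequality $2|g(t)\theta|\leq 2k\theta^{2k}+C_kM^{2k/(2k-1)}$ (valid when $\|g\|_\infty\leq M$) absorbs the drift into the confinement and yields $L_{g(t)}V\leq -kV+C(1+M^{2k/(2k-1)})$ for a constant $C>0$ depending only on $\psi$. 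Dynkin's formula applied to $\eee^{kt}V(\theta(t))$, after localizing by $\tau_R=\inf\{t:|\theta(t)|\geq R\}$ and letting $R\to\infty$ via Fatou, yields the pointwise bound $\sup_t\bbE[V(\theta(t))\mid\theta(0)=\eta]\leq V(\eta)+C'(1+M^{2k/(2k-1)})$ for every deterministic $\eta$. Averaging against $\rho_0(y,\eta)\eee^{-2\psi(\eta)}\dd\eta$, using that $\rho_0$ is a nice profile so that $\int\eta^2\rho_0(y,\eta)\eee^{-2\psi(\eta)}\dd\eta\leq\|\eta^2\|_{2,\psi}\sup_y\|\rho_0(y,\cdot)\|_{2,\psi}<\infty$ uniformly in $y$, and then Jensen's inequality $\bbE|\theta^y(t)|\leq\sqrt{\bbE V(\theta^y(t))}$, one obtains $\sup_{t,y}\bbE|\theta^y(t)|\leq A+BM^{k/(2k-1)}$.

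Feeding this into $\Phi[g]^x(t)=\int_{\bbT^d}J(y-x)\,\bbE[\theta^y(t)]\dd y$ and using $\|J\|_1<\infty$ delivers the desired estimate $\|\Phi[g]\|_\infty\leq\|J\|_1\bigl(A+B\|g\|_\infty^\alpha\bigr)$ with exponent $\alpha=k/(2k-1)$; the strict inequality $\alpha<1$ uses $k\geq 2$, and the iteration argument of the first paragraph then closes the proof. The main obstacle will be the Lyapunov step: tracking the lower-order terms of $\psi'$ carefully to extract the correct exponent $2k/(2k-1)$ in the $M$-dependence, and justifying Dynkin's formula for the unbounded quadratic $V$ via the standard localization-plus-Fatou procedure. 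Everything else is routine bookkeeping.
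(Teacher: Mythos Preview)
Your proof is correct. The overall strategy coincides with the paper's --- derive a sublinear a~priori bound $\|\Phi[g]\|_\infty \leq A + B\|g\|_\infty^\alpha$ with $\alpha<1$ and then observe that the scalar recursion $M_{n+1}\leq A+BM_n^\alpha$ stays bounded --- but the mechanism behind the sublinear bound is different. The paper works one-sidedly and controls $\bbE_\theta^{h^y}[\theta(t)]$ by a comparison argument: it introduces the threshold $\theta^*(h)=\sup\{\theta:\exists\, t,y\ \text{with}\ \bbE_\theta^{h^y}(\theta(t))\geq\theta\}$, bounds $|\Phi[h]|_+\leq \hat J_0\,\theta^*(h)+c(\rho_0)$, and then shows $\theta^*(h)\lesssim |h|_+^{1/(2k-1)}$ by comparing the diffusion with a reflected Brownian motion with unit negative drift on $[\eta,\infty)$, $\eta\sim|h|_+^{1/(2k-1)}$. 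Your Lyapunov approach with $V(\theta)=\theta^2$ is more systematic and sidesteps the one-dimensional comparison/coupling machinery entirely, at the cost of the weaker exponent $\alpha=k/(2k-1)$ instead of $1/(2k-1)$; since both are strictly below~$1$ for $k\geq 2$, nothing is lost for the lemma. As a bonus, your route delivers a uniform-in-time second-moment bound and would extend to vector-valued spins, whereas the paper's comparison argument is intrinsically one-dimensional.
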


\begin{proof}
It is enough to  prove the statement of Lemma~\ref{lem:Sup-Phin} for 
one-sided quantities 
\be{one.1}
 \abs{h}_+\df \sup_t\max_x h^x(t)\quad {\rm and}\quad 
 |h|_{\Phi ,+} \df \sup_n \abs{\Phi^n [h]}_{+} .
\ee
Recalling \eqref{eq:rhoth} and \eqref{eq:Heat-k}, we see that
 \be{one.2}
  \Phi [h]^x(t) = \int_{\bbT^d} \int_\bbR J (y-x )  \rho_0 (y , \theta ) \eee^{-{2}\psi (\theta )} 
  \bbE_\theta^{h^y}\lb \theta(t)\rb\,  
  \dd\theta \dd y .
 \ee
 The idea behind the proof is that if $|h|_+$ is very large, then the strong 
 inward drift due to the potential $\psi$ will ensure that 
  $  \bbE_\theta^{h^y}\lb \theta(t)\rb\ll |h|_+$, which in turn implies that 
 $  \left|\Phi [h]\right|_+$ will be smaller than $|h|_+$. This implies that 
  $\left|\Phi^n [h]\right|_+$ cannot grow indefinitely with $n$, which is the 
  assertion of the theorem. 
  
To prove this, define 
\be{eq:theta-star} 
\theta^* (h ) = \sup\lbr \theta~:~ \exists t, y\ 
\text{such that $\bbE_\theta^{h^y} ( \theta(t))  \geq \theta$}\rbr \vee 0.
\ee
Since by  assumption  
\eqref{eq:psi-con1} on $\psi$, the derivative $\psi^\prime (\theta )$ tends to $\infty$ as
$\theta$ grows,  
it is easy to deduce from comparison results for one dimensional diffusions  
(the proof of \eqref{eq:theta-star-lim} below  gives a quantitative bound along these lines)
that $\theta^* (h )$ is finite whenever $\abs{h}_+ < \infty$.
Then,  since for any $\theta$ and $t$,  $\bbE_\theta^{h^y} ( \theta(t)) 
\leq \abs{\theta}\vee \theta^* (h )$, 
\be{eq:abs-Phi-bound} 
\abs{\Phi [h]}_+ \leq \theta^* (h ) \hat J_0 + c(\rho_0 ) , 
\ee
where $\hat J_0 {:= \int_{\Tod} \abs{J (x)}\dd x}$, and 
\be{eq:c-rho} 
c (\rho_0 ) = \max_x \int_{\Tod}\int_{\bbR} J (y-x ) \rho_0 (y , \theta )\abs{\theta} \eee^{-{2}\psi (\theta )}\dd \theta \dd y 
\leq \hat J_0 \|\theta\|_{2,\psi} \max_x \|\rho_0 (x , \cdot ) \|_{2,\psi} .
\ee
The term $c(\rho_0)$ is just a finite constant and does not depend on 
$h$, and hence is irrelevant.
What we need to show is that $\theta^*(h)$ becomes much smaller than 
$|h|_+$, as $|h|_+$ grows, i.e. that 
\be{eq:theta-star-lim} 
\lim_{H\to\infty} \sup_{\abs{h }_+ = H} \frac{\theta^* (h )}{H} = 0 .
\ee
Clearly, from the discussion above, \eqref{eq:theta-star-lim} implies that 
$\lbr \abs{\Phi^n [h]}_+\rbr$ is a 
bounded sequence, and \eqref{eq:sup-Phin} follows.

It remains to prove \eqref{eq:theta-star-lim}.  The argument is based on 
the following comparison result for one-dimensional 
diffusions:  For $i=1,2$ consider 
\be{eq:dif-bi} 
\dd\theta^i_t = \dd B(t)  + b^i (t , \theta^i_t )\dd t .
\ee 
Assume that the fields $b^i$ are continuous in $t$ and smooth in $\theta$ and that 
\be{eq:bi-cond} 
\forall ~t, \theta \quad  b^1 (t, \theta ) 
\leq b^2 (t , \theta )\quad 
\text{and $\limsup_{\abs{\theta}\to\infty}\sup_t { {\rm sign}(\theta )  
b^2 (t, \theta )} < 0 $} .
\ee
Then,  for any $t$ and $\theta$, 
\be{eq:theta-i-conc} 
\bbE_\theta (\theta^1_t ) \leq \bbE_\theta (\theta^2_t ). 
\ee
Indeed, let $\tau_n$ be the first exit time from $[-n, n]$. The second condition in  \eqref{eq:bi-cond} implies that 
$\bbE_\theta \lb \theta^2_t\rb = \lim_{n\to\infty} \bbE_\theta \lb \theta^2_{t\wedge \tau_n}\rb$. On the other hand,  
by the usual Yamada comparison result; {see for instance Porposition~5.2.18 in \cite{KS91},} 
\be{tttt.5}
\bbE_\theta (\theta^1_{t\wedge \tau_n} ) \leq \bbE_\theta (\theta^2_{t\wedge\tau_n} ), 
\ee
for any $n, t$ and $\theta$.  
Consequently, in order to prove \eqref{eq:theta-star-lim}  we may substitute 
$-\psi^\prime (\theta ) + h(t)$ by a larger drift 
$b(t, \theta )$ which satisfies the second condition in 
\eqref{eq:bi-cond}, 
and, furthermore, 
we may choose some $\bar\theta $ and consider reflected diffusions on  
$[\bar\theta , \infty )$. 
The assumption  \eqref{eq:psi-con1} on $\psi$ implies that there exists 
a $\theta_0\in \R_+$ such that $\psi$ is striclty convex on
 $[\theta_0,\infty)$. Consequently, if $|h|_+=H$, there exists 
 $\eta\in \R_+$, such that for all
 $\theta \in [\eta,\infty)$, 
 \be{convex.1}
 -\psi^\prime (\theta ) + h(t)\leq -\psi^\prime (\theta ) + H\leq 
 -1.
 \ee
For instance \eqref{convex.1} holds if we choose 
$\eta \sim \sqrt[2k-1]{H}$.
If we denote by $\bar\theta^{{\eta}}_t$ the reflection at $\eta$ of the 
Brownian motion with unit negative drift, then 
\be{eq:refl-dif} 
\bbE_\theta \lb \bar\theta^\eta_t \rb  \geq \bbE_\theta^h (\theta(t) ) .
\ee
for any $\abs{h}_+\leq H$ and for any  $t\geq 0, \theta \geq \eta$. 

The reflected diffusion $\bar \theta^\eta$ is positively recurrent and its 
invariant distribution 
has density $f_\eta (\theta )= 2\eee^{-2 (\theta -\eta )}
\1_{\lbr \theta >\eta\rbr}$. 
Hence, by monotone coupling, 
\be{gauss.1}
\sup_t \bbE_\eta \lb \bar\theta^\eta_t \rb  
\leq
\int_{\eta}^\infty \theta f_\eta (\theta )\dd\theta = 
\eta + \frac{1}{2}. 
\ee
Consequently, for any  $t$ and for any $\theta >\eta$, 
\be{gauss.2}
\begin{split}
\bbE_\theta\lb \bar\theta^\eta_{t}\rb & = 
\bbE_\theta\lb \bar\theta^\eta_{t\wedge \tau_\eta} \rb +  
\bbE_\theta\lb \bar\theta^\eta_{t} - \bar\theta^\eta_{t\wedge \tau_\eta}\rb = 
\bbE_\theta\lb \bar\theta^\eta_{t\wedge \tau_\eta} \rb +  
\bbE_\theta\lb \bar\theta^\eta_{t} - \bar\theta^\eta_{t\wedge \tau_\eta}\rb 
\1_{\lbr \tau_\eta \leq t\rbr} 
\\ 
& 
= 
\lb\theta - \bbE_\theta\lb t \wedge \tau_\eta\rb\rb
+ \bbE_\theta\lb \bar\theta^\eta_{t} - \eta \rb 
\1_{\lbr \tau_\eta \leq t\rbr}  \stackrel{\eqref{gauss.1}}{\leq}  
\lb\theta - \bbE_\theta\lb t \wedge \tau_\eta\rb\rb  + \frac{1}{2}  
\bbP_\theta\lb \tau_\eta \leq t\rb. 
\end{split}
\ee
However,  $- \bbE_\theta\lb t \wedge \tau_\eta\rb + \frac{1}{2}
\bbP_\theta\lb \tau_\eta \leq t\rb $ is non-positive for all $t\geq 0$ as 
soon as $\theta- \eta \geq 
K_1
$
some sufficiently large constant $K_1$.  
In particular, if we chose
 $\eta \sim \sqrt[2k-1]{H}$,  
we conclude that there exists a finite constant $K_2$,  such that 
 $\bbE_\theta \lb \bar\theta^\eta_t \rb <  \theta$, 
for all $t>0$ and all $\theta>K_2\sqrt[2k-1]{H}$.
The target \eqref{eq:theta-star-lim} follows.  
\end{proof}

Differentiating  both sides of  
\eqref{eq:Phi-map}  with respect to $t$ and relying on  \eqref{eq:FP-Lh}, we observe:  
\be{eq:nabla-t1} 
\begin{split}
\partial_t \Phi^{n+1} [h]^x(t)  &= \int_{\bbT^d} \int_\bbR J (y-x ) \lb L_{\Phi^n [h]^y(t)} \eta\rb   \rho_t^{\Phi^n [h]^y} (\eta ) \eee^{-{2}\psi (\eta )}\dd\eta\dd y \\ 
&= 
\int_{\bbT^d} \int_\bbR J (y-x )  \lb \Phi^n [h]^y(t) - \psi^\prime (\eta )\rb 
\rho_t^{\Phi^n [h]^y} (\eta ) \eee^{-{2}\psi (\eta )}\dd\eta\dd y . 
\end{split}
\ee
We can continue differentiating with respect to $t$ in \eqref{eq:nabla-t1} .   For instance, 
\be{two.1}
\partial_t ^2 \Phi^{n+1} [h]^x(t) = 
 \int_{\bbT^d} \int_\bbR J (y-x )  \lb \partial_t \Phi^n [h]^y(t) -  L_{\Phi^n [h]^y(t)}  \psi^\prime (\eta )\rb 
\rho_t^{\Phi^n [h]^y} (\eta ) \eee^{-{2}\psi (\eta )}\dd\eta\dd y . 
\ee
By our assumption \eqref{eq:psi-con1} all integrals of the form $\int_\bbR \lb \psi^{(k)} (\eta )\rb^\ell \eee^{-{2}\psi (\eta )} \dd\eta$ 
are finite. 
By iteration, and in view of \eqref{eq:L2-rhoht}, we arrive to the following 
conclusion: 
\begin{lemma} 
\label{lem-deriv-t} 
For every  $k\in \mathbb{N}$
there exists a monotone function  $c_k$ on $[0,\infty)^2$, such 
that the following happens:  If 
 an admissible drift field $ h\in \AA$ satisfies 
\eqref{eq:asn-htx}  
and hence, by Lemma~\ref{lem:Sup-Phin} also \eqref{eq:sup-Phin}, 
 then, for every $T\geq 0$ and each $n\geq k$, 
\be{eq:der-bounds} 
\max_{s\leq T} \max_{y\in \bbT^d} \left|\partial^k_s \Phi^n [h]^y(s)\right| 
\leq c_k (T, |h|_\Phi ) .
\ee
\end{lemma}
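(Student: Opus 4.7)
The plan is to prove \eqref{eq:der-bounds} by a double induction on $k$, exploiting the recursive identity already visible in \eqref{eq:nabla-t1} and \eqref{two.1}: one $t$-derivative of $\Phi^{n+1}[h]^x(t)$ can always be traded for the action of $L_{\Phi^n[h]^y(t)}$ on a polynomial test function in $\eta$, plus terms where $\partial_t$ falls on the drift $\Phi^n[h]^y(t)$ itself. Iterating this, I would first establish the structural formula
\begin{equation*}
\partial_t^k \Phi^{n+1}[h]^x(t) = \sum_{\alpha} \int_{\Tod} J(y-x) \, \Pi_\alpha\!\bigl(\partial_t^{j_1}\Phi^n[h]^y(t),\dots,\partial_t^{j_{\ell_\alpha}}\Phi^n[h]^y(t)\bigr) \int_\bbR P_\alpha(\eta)\, \rho_t^{\Phi^n[h]^y}(\eta) \eee^{-2\psi(\eta)}\, \dd\eta \, \dd y,
\end{equation*}
where the sum is finite, each $j_i \leq k-1$, $\Pi_\alpha$ is a monomial, and each $P_\alpha(\eta)$ is a polynomial in $\eta$ and in the derivatives $\psi^{(j)}(\eta)$ (hence itself a polynomial in $\eta$, by \eqref{eq:psi-con1}). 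This follows by induction on $k$: differentiating one such term and applying \eqref{eq:FP-Lh} followed by integration by parts (moving $L_{g(t)}^*$ onto $P_\alpha$) replaces $P_\alpha$ by $L_{g(t)} P_\alpha = \tfrac12 P_\alpha'' - \psi'(\eta) P_\alpha' + \Phi^n[h]^y(t)\, P_\alpha'$, which has the same shape, while differentiating the $\Pi_\alpha$ factor just raises one $j_i$ by one.

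With the structural formula in hand, I would bound each term. Cauchy--Schwarz gives
\begin{equation*}
\left| \int_\bbR P_\alpha(\eta)\, \rho_t^{\Phi^n[h]^y}(\eta)\, \eee^{-2\psi(\eta)} \dd\eta \right| \leq \|P_\alpha\|_{2,\psi}\, \bigl\|\rho_t^{\Phi^n[h]^y}\bigr\|_{2,\psi}.
\end{equation*}
The first factor is a finite constant depending only on $\psi$ and the (fixed) degree of $P_\alpha$, because $\eee^{-2\psi(\eta)}$ decays faster than any polynomial by \eqref{eq:psi-con1}. For the second factor, Lemma~\ref{prop:L2} combined with the uniform bound $\|\Phi^n[h]\|_\infty \leq |h|_\Phi$ from Lemma~\ref{lem:Sup-Phin} yields
\begin{equation*}
\bigl\|\rho_t^{\Phi^n[h]^y}\bigr\|_{2,\psi}^2 \leq \eee^{\frac12 T\, |h|_\Phi^2}\, \max_y \|\rho_0(y,\cdot)\|_{2,\psi}^2 \qquad \text{for all } t \leq T.
\end{equation*}
The monomial $\Pi_\alpha$ of derivatives $\partial_t^{j_i}\Phi^n[h]^y(t)$ is controlled, by the inductive hypothesis at order $\leq k-1$, by a product of constants $c_{j_i}(T, |h|_\Phi)$, provided $n \geq j_i + 1$ for each $i$; since $j_i \leq k-1$ this is ensured by $n \geq k$. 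Summing the finitely many contributions and taking the max over $s \leq T$ and $y \in \Tod$ produces a bound of the claimed form, with $c_k$ monotone in $T$ and $|h|_\Phi$ because each ingredient (the exponential factor, the polynomial $L^2$-norm, and the inductive constants $c_{j_i}$) is.

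The base case $k=0$ is exactly the conclusion of Lemma~\ref{lem:Sup-Phin}. The main point to be careful about is the bookkeeping: the recursion for $\partial_t^k \Phi^{n+1}$ invokes $\partial_t^{k-1}$ of $\Phi^n$, that invokes $\partial_t^{k-2}$ of $\Phi^{n-1}$, and so on, so that each differentiation consumes one power of $\Phi$. The hypothesis $n \geq k$ is precisely what is needed to let this cascade bottom out at the uniform $L^\infty$ control of $\Phi^{n-k}[h]$ coming from Lemma~\ref{lem:Sup-Phin}, without ever having to bound derivatives of the original drift $h$ beyond what is already assumed. Apart from that, the argument is a clean polynomial-growth-versus-Gaussian-type-decay exercise, with no further obstacle.
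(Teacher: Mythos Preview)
Your proposal is correct and follows essentially the same approach as the paper: iterate the differentiation identity \eqref{eq:nabla-t1}--\eqref{two.1}, use the Fokker--Planck equation to transfer $\partial_t$ onto $L_{\Phi^n[h]^y(t)}$ acting on polynomial test functions, and then control everything via the finiteness of $\int P(\eta)\,\eee^{-2\psi(\eta)}\dd\eta$ for polynomials $P$ together with the $L^2$ bound \eqref{eq:L2-rhoht}. The paper's own argument is just the terse paragraph preceding the lemma (``By iteration, and in view of \eqref{eq:L2-rhoht}\dots''), so your version is in fact a more explicit write-up of the same idea, including the careful bookkeeping of the condition $n\geq k$ that the paper leaves implicit.
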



The next lemma shows that $\Phi$ is a contraction for short times.

\begin{lemma}\label{banach1}
Let $h,g\in \AA$. Then, for any $t\in \R_+$, 
\be{eq:Phi-gh-step} 
\| \Phi [g](t)  - \Phi [h](t)  \|^2_{L^2 (\bbT^d )}  
\leq C_J^\prime  \eee^{ (|g |_\Phi^2 + |h|_\Phi^2 )t}  
\int_0^t  \| g(s) - h(s)\|_{L^2 (\bbT^d )}^2 \dd s , 
\ee
with 
\be{gauss.4}
C_J^\prime =  \sup_{x\in\bbT^d} \|\rho_0^x\|_{2,\psi}^2  
\int_{\bbT^d} \int_\bbR  J^2 (y ) \eta^2 \eee^{-{2}\psi (\eta )}\dd\eta \dd y .
\ee
\end{lemma}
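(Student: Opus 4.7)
The plan is to reduce the claim, via Cauchy--Schwarz, to a pointwise-in-$y$ application of the stability estimate from Lemma~\ref{prop:stability}. Set
\[
D_t^y (\eta ) \df \rho_t^{g^y}(\eta) - \rho_t^{h^y}(\eta),
\]
so that by the definition \eqref{eq:Phi-map} of $\Phi$,
\[
\Phi [g]^x (t) - \Phi [h]^x (t)
= \int_{\bbT^d}\int_\bbR J(y-x)\, \eta\, D_t^y(\eta)\, \eee^{-2\psi(\eta)}\dd\eta\dd y .
\]

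First I would apply the Cauchy--Schwarz inequality to this double integral with respect to the joint measure $\eee^{-2\psi(\eta)}\dd\eta\dd y$, splitting the integrand as $\bigl(J(y-x)\eta\bigr)\cdot D_t^y(\eta)$. This gives the pointwise-in-$x$ bound
\[
\left| \Phi [g]^x (t) - \Phi [h]^x (t)\right|^2
\leq \left(\int_{\bbT^d}\int_\bbR J(y-x)^2 \eta^2 \eee^{-2\psi(\eta)}\dd\eta\dd y\right) \cdot \int_{\bbT^d} \|D_t^y\|_{2,\psi}^2 \dd y .
\]
Invoking translation invariance of Lebesgue measure on $\bbT^d$, the first factor equals $\int_{\bbT^d}\int_\bbR J(y)^2 \eta^2 \eee^{-2\psi(\eta)}\dd\eta\dd y$, independently of $x$. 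Integrating the inequality in $x$ over $\bbT^d$ (which is a unit-mass space) then yields
\[
\| \Phi [g](t) - \Phi [h](t) \|_{L^2(\bbT^d)}^2
\leq \Bigl(\int_{\bbT^d}\int_\bbR J(y)^2 \eta^2 \eee^{-2\psi(\eta)}\dd\eta\dd y\Bigr)\int_{\bbT^d} \|D_t^y\|_{2,\psi}^2 \dd y .
\]

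Next I would apply Lemma~\ref{prop:stability} pointwise in $y$: for each fixed $y\in \bbT^d$, the drifts $g^y,h^y\in \sfC^\infty(\bbR_+,\bbR)$ are admissible, and the initial density is $\rho_0(y,\cdot)$, so
\[
\|D_t^y\|_{2,\psi}^2 \leq \eee^{\int_0^t ((g^y(s))^2+(h^y(s))^2)\dd s}\, \|\rho_0(y,\cdot)\|_{2,\psi}^2 \int_0^t \bigl(g^y(s)-h^y(s)\bigr)^2 \dd s .
\]
Since by definition $(g^y(s))^2 \leq |g|_\Phi^2$ and $(h^y(s))^2 \leq |h|_\Phi^2$ uniformly in $y,s$ (taking $n=0$ in \eqref{eq:sup-Phin}), the exponent is dominated by $(|g|_\Phi^2+|h|_\Phi^2)t$, and the prefactor $\|\rho_0(y,\cdot)\|_{2,\psi}^2$ is bounded by $\sup_{x\in\bbT^d}\|\rho_0^x\|_{2,\psi}^2$.

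Finally, I would integrate this bound over $y\in \bbT^d$ and recognise that
\[
\int_{\bbT^d}\int_0^t \bigl(g^y(s)-h^y(s)\bigr)^2 \dd s\, \dd y
= \int_0^t \|g(s)-h(s)\|_{L^2(\bbT^d)}^2 \dd s
\]
by Fubini. Combining these steps collects precisely the constant $C_J'$ defined in \eqref{gauss.4}, giving \eqref{eq:Phi-gh-step}. The estimate is essentially algebraic once Lemma~\ref{prop:stability} is in hand; the only minor point requiring attention is the orderly application of Cauchy--Schwarz in the joint variable $(y,\eta)$ so that the resulting prefactor matches $C_J'$ exactly and no cross terms with $J$ evaluated at distinct points appear.
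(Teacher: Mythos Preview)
Your proposal is correct and follows essentially the same route as the paper: Cauchy--Schwarz in the joint variable $(y,\eta)$ to extract the constant $C_J$, then the pointwise stability bound of Lemma~\ref{prop:stability}, then the uniform control of the exponent via $|g|_\Phi,|h|_\Phi$ from Lemma~\ref{lem:Sup-Phin}, and finally integration over $\bbT^d$ with Fubini. The only cosmetic difference is that you spell out explicitly why $(g^y(s))^2\le |g|_\Phi^2$ (the $n=0$ term in \eqref{eq:sup-Phin}), whereas the paper just cites Lemma~\ref{lem:Sup-Phin}.
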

\begin{proof}
First, for any $x$ and for any $t>0$, using the Cauchy-Schwartz inequality, we obtain that 
\be{eq:Phi-gh}
\left| \Phi [h ]^x(t) -  \Phi [g ]^x(t)\right|^2  \leq  C_J \int_{\bbT^d} \left\| \rho_t^{g^y } - \rho_t^{h^y}\right\|^2_{2,\psi} \dd y  , 
\ee
where 
\be{gauss.3}
C_J = \int_{\bbT^d} \int_\bbR  J^2 (y ) \eta^2 \eee^{-{2}\psi (\eta )}\dd\eta \dd y .
\ee
By Lemma \ref{prop:stability},
\be{banach.2}
\left\| \rho_t^{g^y } - \rho_t^{h^y}\right\|^2_{2,\psi}
\leq \eee^{\int_0^t \left(g^y({s})^2+h^y(s)^2\right)\dd s}\left\|\rho^y_0\right\|_{2,\psi}^2\int_0^t\left(g^y(s)-h^y(s)\right)^2\dd s.
\ee
We now use Lemma \ref{lem:Sup-Phin} to bound the right-hand side by
\be{babach.3}
 \eee^{t \left( |g|_{\Phi}^2+ |h|_{\Phi}^2\right)}
 \left\|\rho^y_0\right\|_{2,\psi}^2\int_0^t\left(g^y(s)-h^y(s)\right)^2\dd s.
 \ee
Integrating the resulting bound over the torus yields the assertion of the lemma.
\end{proof}

We are now ready to proof the theorem. 

\begin{proof} (of Theorem \ref{thm:L-MV})
Let $h , g\in \AA$ be any two initial 
drift fields 
satisfying  $\|h\|_\infty , \|g\|_\infty <\infty$. 
We want to show that the sequences $\Phi^n[h]$  and  
 $\Phi^n[g]$
converge  to 
the same fix-point of $\Phi$.  
Iterating the bound in \eqref{eq:Phi-gh-step} gives 
\be{eq:Phi-gh-nstep} 
\sup_{s\leq t}\|\Phi^{n}  [h](s)  - \Phi^{n}  [g](s)  \|^2_{L^2 (\bbT^d )}  
\leq  \frac{\lb t C_J^\prime  \eee^{ (|g |_\Phi^2 + |h|_\Phi^2 )t}  \rb^n }{n !} 
\sup_{s\leq t } \| h (s )  - g(s)\|_{L^2 (\bbT^d )}^2  .
\ee
Therefore, if $\hat h = \lim_{n\to\infty} \Phi^{n}  [h]$ exists for {\em some} 
$\|\cdot\|_\infty$-bounded $h\in \AA$, then  
$\hat h = \lim_{n\to\infty} \Phi^{n}  [g]$ for {\em any} 
$\|\cdot\|_\infty$-bounded bounded $g\in \AA$. That is existence would imply 
uniqueness.

However, plugging  a $\|\cdot\|_\infty$-bounded $h\in \AA$ and  
$g = \Phi[h]$  into \eqref{eq:Phi-gh-nstep} implies 
that $\Phi^n[h]$ is a Cauchy sequence,  
and hence 
indeed converges to some $\|\cdot\|_\infty$-bounded element 
$\hat h\in\AA$, such that for any $t <\infty$, 
\be{eq:Phi-lim} 
\lim_{n\to\infty} \sup_{s\leq t}\| \Phi^n [h](s) -\hat h(s)\|_{L^2 (\bbT^d )}^2 \dd s = 0 .
\ee
Moreover, $\hat h$ is a fixpoint of $\Phi$. 
Therefore $\hat\rho = \Phi_1 (\hat h)$ satisfies $\hat h = \Phi_2 (\hat\rho )$. This is precisely \eqref{eq:L-MF-MV}.
By \eqref{eq:rhoth} and Lemma~\ref{lem-deriv-t}, $\hat\rho_t (x , \eta )$ is 
continuous in $x$ and 
 smooth in $\eta$ and $t$.
\end{proof}

\section{Hydrodynamic Limit (HDL)}
\label{sec:HDL}
Having established the existence of a unique smooth solution to the McKean-Vlasov equations, we 
now prove the convergence of the empirical process of our particle system to this solution. The
proof is based on entropy estimates.

Let $\rho_0 (x , \theta )$ be a nice initial profile, and let 
$\lb \rho , h \rb$ 
be  
the unique  strong 
solution to \eqref{eq:L-MF-MV} with initial condition $\rho_0$.
{In the sequel, elements of $\bbR^{\N^d}$ will be denoted as $\utheta$. } 
For each $N$ and $t\geq 0$  consider the following product density on 
$\bbR^{N^d}$ with respect to 
${\rm e}^{-\sum{2}\psi (\theta_i )} \df 
{\rm e}^{-{2}\psi(\utheta )}$: 
\be{eq:rho-N}
\rho_t^N (\utheta ) = \prod_1^{N^d} \rho_t \lb\frac{i}{N} , \theta_i \rb.
\ee
Another way to think about $\rho_t^N $ is as follows: For each $i=1, \dots, N$ 
define $h^i_t\df h^{i/N}_t$, where  $h_t^x$ satisfies the second of \eqref{eq:L-MF-MV}.

Consider 
\be{eq:SDE-N-hat}
\dd\hat \theta_i (t ) = \dd B_i (t) -\psi^\prime \lb \hat\theta_i (t )\rb \dd t+
h^i_t\dd t .
\ee
Then $\rho_t \lb\frac{i}{N} , \theta \rb $ is the density of $\hat\theta_i (t)$.

Let us turn to the microscopic dynamics \eqref{eq:SDE-N}. 
Let $f_0^N (\utheta )$ be the initial density of $\utheta (0) $ on
 $\bbR^{\TodN}$ 
with respect to 
$\eee^{-2\psi(\utheta )}$. 
We assume that the relative entropy  
\be{funny.1}
\calH \lb f_0^N\, \big |\,  \rho_0^N\rb := \int f_0^N (\utheta )\ln\lb\frac{ f_0^N (\utheta )}{\rho_0^N (\utheta }\rb  
\eee^{-2\psi(\utheta )}\dd \utheta , 
\ee
satisfies
\be{eq:C}
\lim_{N\to\infty}\frac{1}{N^d}\calH \lb f_0^N\, \big |\,  \rho_0^N\rb = 0 .
\tag{C}
\ee

Our first proposition states that this property is conserved in time.
\begin{proposition}
\label{thm:A} 
 Let $\bbP^N_T$ be the distribution on $\sfC\lb [0,T], \bbR^{N^d}\rb$ of the 
 diffusion 
 process \eqref{eq:SDE-N} with initial density $f^N_0$, and let $\hat\bbP^N_T$ be 
 the distribution
 of the decoupled process \eqref{eq:SDE-N-hat} with the 
 initial density  $\rho^N_0$. Then, 
 assuming \eqref{eq:C}
 \be{eq:ClaimA}
 \lim_{N\to\infty}\frac{1}{N^d}\calH \lb \bbP^N_T \, \big| \, \hat\bbP^N_T\rb = 0 , 
 \ee
 for any $T \geq 0$. 
\end{proposition}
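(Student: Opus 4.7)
The plan is a Girsanov $+$ Donsker--Varadhan $+$ Gronwall entropy argument, in the spirit of Sznitman's treatment of propagation of chaos, adapted to the local mean-field setting. First, I apply Girsanov's theorem. Since \eqref{eq:SDE-N} and \eqref{eq:SDE-N-hat} share the same diffusion coefficient and differ only in drift, with discrepancy
\[
b_i(s,\theta) \;=\; \frac{1}{N^d}\sum_{j\in\TodN} J\!\left(\frac{j-i}{N}\right)\theta_j \;-\; h^{i/N}_s ,
\]
the chain rule for relative entropy produces
\[
\calH\!\lb\bbP^N_T \bgiven \hat\bbP^N_T\rb \;=\; \calH\!\lb f_0^N \bgiven \rho_0^N\rb \;+\; \frac{1}{2}\,\bbE^{\bbP^N_T}\!\lbs\int_0^T \sum_{i} b_i(s,\theta(s))^2\,\dd s\rbs ,
\]
once Novikov's condition is checked, which follows from the polynomial growth of $\psi$ and the uniform bound on $h$ from Lemma~\ref{lem:Sup-Phin}.

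Next, I split $b_i = A_i + R_i$, where
\[
A_i(s,\theta) \;=\; \frac{1}{N^d}\sum_j J\!\lb\frac{j-i}{N}\rb\bigl[\theta_j - \bar\theta_s(j/N)\bigr], \qquad \bar\theta_s(y) \;:=\; \int\eta\,\rho_s(y,\eta)\,\eee^{-2\psi(\eta)}\,\dd\eta ,
\]
captures the deviations of the empirical average from its target, while $R_i(s)$ is the Riemann-sum discrepancy between $\frac{1}{N^d}\sum_j J((j-i)/N)\bar\theta_s(j/N)$ and $h^{i/N}_s$. By smoothness of $J$ and continuity of $\bar\theta_s$ on $\Tod$ (guaranteed by Theorem~\ref{thm:L-MV} together with Lemma~\ref{prop:L2}), $\max_{i,\, s\leq T}|R_i(s)|\to 0$, so $\sum_i R_i^2 = o(N^d)$. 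For the fluctuation part $\sum_i A_i^2 = \|KY\|_2^2$, with $K_{ij} = \frac{1}{N^d}J((j-i)/N)$ and $Y_j = \theta_j-\bar\theta_s(j/N)$. The Hilbert--Schmidt norm
$\|K\|_{HS}^2 = \frac{1}{N^{2d}}\sum_{i,j}J((j-i)/N)^2 = \|J\|_{L^2(\Tod)}^2 + o(1)$
is bounded independently of $N$ --- this is the crucial manifestation of the local mean-field structure.

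To turn this into an estimate on $\calH_t := \calH(\bbP^N_t\,|\,\hat\bbP^N_t)$, I invoke the Donsker--Varadhan variational inequality against the product reference density $\rho_t^N$:
\[
\bbE^{g_t^N}\!\lbs\sum_i b_i^2\rbs \;\le\; \frac{1}{\lambda}\log \bbE^{\rho_t^N}\!\lbs \eee^{\lambda \sum_i b_i^2}\rbs \;+\; \frac{1}{\lambda}\calH\!\lb g_t^N\bgiven \rho_t^N\rb ,
\]
where $g_t^N$ denotes the time-$t$ marginal of $\bbP^N_t$ and $\calH(g_t^N|\rho_t^N)\leq \calH_t$ by the data-processing inequality. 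Under $\rho_t^N$ the $Y_j$ are independent, centred and uniformly sub-Gaussian (using $\psi(\theta)\sim\theta^{2k}$, $k\geq 2$, together with the $L^2(\eee^{-2\psi})$-bounds from Lemma~\ref{prop:L2}). A Hanson--Wright-type bound then gives $\log\bbE^{\rho_t^N}[\eee^{\lambda\|KY\|^2}] \leq C\lambda$ for some fixed $\lambda_0>0$, $C<\infty$ independent of $N$, and every $\lambda\in(0,\lambda_0)$. Combining with $\dot\calH_t = \tfrac{1}{2}\bbE^{g_t^N}[\sum_i b_i^2(t,\cdot)]$ yields a differential inequality
\[
\dot\calH_t \;\le\; \tfrac{1}{2\lambda}\calH_t + C' + o(N^d),
\]
whose integration gives $\calH_T \le \eee^{T/(2\lambda)}(\calH_0 + o(N^d))$; dividing by $N^d$ and invoking hypothesis \eqref{eq:C} produces \eqref{eq:ClaimA}.

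The main obstacle is the uniform-in-$N$ exponential moment bound on $\|KY\|_2^2$ under the independent product $\rho_t^N$. This is precisely where the local mean-field geometry is essential: because $J$ is a smooth kernel on $\Tod$, the convolution matrix $K$ has Hilbert--Schmidt norm of order one rather than of order $N^d$, which forces $\sum_i A_i^2$ to be typically $O(1)$ under the product reference. Without this gain, the Gronwall mechanism would only produce $\calH_T = O(N^d)$, missing the desired $o(N^d)$ by a constant.
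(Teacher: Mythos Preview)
Your proposal is correct and follows essentially the same route as the paper: Girsanov gives $\dot\calH_t = \tfrac12\,\bbE^{\bbP^N}\bigl[\sum_i b_i^2\bigr]$, the drift discrepancy is split into a Riemann-sum error $R_i$ and a fluctuation $A_i$ with $\sum_i A_i^2 = \|KY\|_2^2$ (the paper's $N^d X_t$), and the entropy inequality against the product measure $\rho_t^N$ together with an exponential-moment bound on $\|KY\|_2^2$ closes a Gronwall loop. The only cosmetic difference is that the paper proves the needed bound $\frac{1}{N^d}\log\hat\bbE_t^N\bigl[{\rm e}^{\delta N^d X_t}\bigr]\to 0$ from scratch via a one-variable recursion (Lemma~\ref{lem:small-delta}), whereas you invoke Hanson--Wright directly using the bounded Hilbert--Schmidt and operator norms of $K$; both arguments exploit the same sub-Gaussianity of the $Y_j$ under $\rho_t^N$ and give what is needed.
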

The proof of this proposition is given in the next subsection.

Consider now the empirical profiles $\mu^N_t$ defined in
 \eqref{empdef.1}.
For each $t$ the random measure $\mu^N_t$ is a probability 
measure on $\bbR\times \Tod$. We 
denote the latter space as $\bbM_1 (\Tod \times\bbR  )$. It is a Polish space, 
and in the sequel we shall use $\dd_{\mathsf{LP}}$ for its Lévy-Prohorov type 
metric, 
\be{eq:LP-met} 
\dd_{\mathsf{LP}} (\mu , \nu ) := \sup_{f\in \mathsf{Lip}\lb \Tod\times \bbR \rb} 
\left| \int f (x , \theta )\mu \lb \dd x,\dd\theta \rb -\int f (x , \theta )\nu  \lb \dd x,\dd\theta \rb 
\right| , 
\ee
where $\mathsf{Lip}\lb \Tod\times \bbR \rb$ is the set of all globally $1$-Lipschitz functions $f$ 
with $\|f \|_{\infty} \leq 1$. 

For $t\leq T$ we can think of $\mu^N_t$ in terms of 
marginals 
of the random measures on path space: 
For each $N$ and every  $i\in\TodN$, the trajectory  $\theta_i^N := \theta_i^N [0,T]$ in 
\eqref{eq:SDE-N} is a 
random element of $\sfC \lb [0,T]\rb$. Define
\be{eq:L-N} 
L^N_T (\dd x , \dd\theta ) = 
\frac{1}{N^d} \sum_i\delta_{\lb i/N ,  \theta_i^N [0,T]\rb }.
\ee
Then, 
$L^N_T$ is a random element of $\bbM_1 \lb \Tod\times \sfC \lb [0,T] ,\bbR\rb \rb$. The map 
\be{plop.7}
 L^N_T\ \mapsto\ \mu^N [0,T] := \lb \mu^N_t\, ;\, t\in [0,T]\rb 
\ee
is continuous from $\bbM_1 \lb \Tod\times \sfC \lb [0,T] ,\bbR\rb \rb$ to 
$\sfC \lb [0,T] , \bbM_1 (\bbR\times\Tod )\rb$. 

With a slight abuse of notation we continue to use $\hat\bbP^N_T$
and, respectively,  $\bbP^N_T$ for the distributions of   $L^N_T$ 
on $\bbM_1 \lb \Tod\times \sfC \lb [0,T] ,\bbR\rb \rb$
and $\mu^N [0,T]$ on $\sfC \lb [0,T] , \bbM_1 (\bbR\times\Tod )\rb$, whenever 
$\utheta$ is the decoupled diffusion \eqref{eq:SDE-N-hat} or,
 respectively, if 
$\utheta$ satisfies the local mean-field sde 
\eqref{eq:SDE-N}.

Our next proposition states 	an exponential concentration bound 
for the {\em decoupled} measure $\hat\bbP_T^N$.

\begin{proposition} 
\label{thm:conc-PN-hat} 
Let $\rho_0$ be a nice initial profile. Then for any 
$T<\infty$  
 and $\epsilon >0$ 
there exists a  positive 
constant 
$C_T (\epsilon ) > 0$, such that 
\be{eq:conc-PN-hat}
\hat\bbP_T^N\lb \max_{t\in [0, T]}\dd_{\mathsf{LP}}
\lb \mu^N_t , \rho_t (x, \theta ) {\rm e}^{-{2}\psi (\theta )}\dd x\dd\theta \rb \geq \epsilon \rb 
\leq {\rm e}^{-N^d C_T (\epsilon )} ,
\ee
for all  $N$  large enough. 
\end{proposition}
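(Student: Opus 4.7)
The plan is to exploit the independence of the $\hat\theta_i$ under $\hat\bbP^N_T$, dualising the Lévy--Prohorov distance to bounded Lipschitz test functions and then applying Hoeffding's inequality after netting in both function-space and time. First I would introduce the deterministic ``expected discrete measure''
\[
\nu^N_t(\dd x,\dd\theta) := \frac{1}{N^d}\sum_{i\in\TodN}\delta_{i/N}(\dd x)\,\rho_t(i/N,\theta)\,\eee^{-2\psi(\theta)}\,\dd\theta,
\]
and split $\dd_{\mathsf{LP}}(\mu^N_t,\rho_t\eee^{-2\psi}\dd x\,\dd\theta)\leq \dd_{\mathsf{LP}}(\mu^N_t,\nu^N_t) + \dd_{\mathsf{LP}}(\nu^N_t,\rho_t\eee^{-2\psi}\dd x\,\dd\theta)$. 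For the deterministic term, $L^2(\bbR,\eee^{-2\psi})$-continuity of $x\mapsto \rho_t(x,\cdot)$ from Theorem~\ref{thm:L-MV} (made uniform in $t\leq T$ by smoothness in $t$ together with compactness of $[0,T]$), combined with Cauchy--Schwarz against $1$-Lipschitz test functions bounded by $1$ and $\int\eee^{-2\psi}\dd\theta<\infty$, yields a deterministic modulus $\omega(1/N)\to 0$; hence this contribution is at most $\epsilon/3$ for $N$ large, and the task reduces to concentrating $\mu^N_t$ around $\nu^N_t$.

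Next I would truncate to a large $\theta$-box. Since each $\hat\theta_i$ solves \eqref{eq:SDE-N-hat} with a uniformly bounded drift field $h$ (Lemma~\ref{lem:Sup-Phin}) and a superpolynomial confining potential, the Lyapunov function $\psi$ itself satisfies $L_{h^i(t)}\psi = \tfrac12\psi'' - (\psi')^2 + h^i\psi' \leq -c(\psi')^2 + C$ outside a compact, which by a standard submartingale argument yields uniform single-particle tails $\bbP(\sup_{t\leq T}|\hat\theta_i(t)|>R)\leq \exp(-c_T R^{2k})$. Choose $R=R(\epsilon,T)$ so that this probability is at most $\epsilon/32$; by Hoeffding applied to the independent indicators $\1\{\sup_{t\leq T}|\hat\theta_i(t)|>R\}$, the fraction of escaping particles exceeds $\epsilon/16$ only with probability $\exp(-c_R N^d)$. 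On the complementary event the escaping particles contribute at most $\epsilon/8$ to $|\mu^N_t(F)-\nu^N_t(F)|$ for any $1$-Lipschitz $F$ with $\|F\|_\infty\leq 1$.

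On the compact set $\bbT^d\times[-R,R]$ I would then pick an Arzelà--Ascoli $\epsilon/16$-net $\{F_1,\dots,F_M\}$ of $1$-Lipschitz functions bounded by $1$, and a time partition $0=t_0<\cdots<t_K=T$ of mesh $T/K$. For each pair $(F_j,t_k)$,
\[
\mu^N_{t_k}(F_j)-\nu^N_{t_k}(F_j) = \frac{1}{N^d}\sum_{i\in\TodN}\Bigl(F_j(i/N,\hat\theta_i(t_k))-\bbE F_j(i/N,\hat\theta_i(t_k))\Bigr)
\]
is a centred sum of independent $[-2,2]$-valued random variables, so Hoeffding bounds it by $\epsilon/16$ outside a set of probability $\leq 2\exp(-c\epsilon^2 N^d)$; since $M,K$ are independent of $N$, a union bound over the $MK$ pairs preserves the exponential rate. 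For uniformity in $t$, on the truncated event every retained particle has drift bounded by $M_R := \max_{|\theta|\leq R}|\psi'(\theta)|+\|h\|_\infty$, so between grid points $|\hat\theta_i(t)-\hat\theta_i(t_k)|\leq |B_i(t)-B_i(t_k)|+M_R T/K$; applying Hoeffding once more to the independent Brownian moduli $\sup_{t\in[t_k,t_{k+1}]}|B_i(t)-B_i(t_k)|$, together with Lévy's modulus of continuity on each sub-interval, controls this at the same exponential rate once $K$ is chosen large enough.

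The main obstacle is precisely the passage from concentration on a finite time grid to uniform concentration on $[0,T]$ while keeping the exponential rate $\exp(-N^d C_T(\epsilon))$ intact. This hinges on having a single-particle tail for $\sup_{t\leq T}|\hat\theta_i(t)|$ that is \emph{stretched-exponential} in $R$, so that a fixed $R=R(\epsilon)$ suffices for truncation instead of requiring $R$ to grow with $N$; the confining potential $\psi$ of degree $2k\geq 4$ provides exactly this via the Lyapunov estimate above, and without it both the tail-truncation and the modulus-of-continuity steps would degrade the rate below $\exp(-N^d C_T(\epsilon))$.
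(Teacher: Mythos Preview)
Your argument is essentially correct and can be made rigorous with minor fixes, but it takes a genuinely different route from the paper. The paper does not net in time or in test functions; instead it works at the level of the path-space empirical measure $L_T^N\in\bbM_1\bigl(\bbT^d\times\sfC([0,T],\bbR)\bigr)$ and uses the standard large-deviation two-step. First, exponential tightness of $\hat\bbP_T^N$: the single-particle laws $\{\bbP^{h^x,\rho_0(x,\cdot)}\}_{x\in\bbT^d}$ form a uniformly tight family on $\sfC([0,T],\bbR)$ (bounded drift, bounded $\|\cdot\|_{2,\psi}$ initial density), and since $\bbT^d$ is compact one proceeds as in the proof of exponential tightness for Sanov's theorem on Polish spaces. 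Second, a weak upper bound at each fixed $\mu$ with $\sfD_T(\mu,\rho)\geq 3\epsilon$: there exist $t\in[0,T]$ and $f\in\mathsf{Lip}(\bbT^d\times\bbR)$ realising the distance, and on the event $\{\sfD_T(\mu,\mu^N)\leq\delta\}$ the centred average $N^{-d}\sum_i\bigl(f(i/N,\theta_i(t))-\hat\bbE_T^N f(i/N,\theta_i(t))\bigr)$ must exceed $\epsilon$; since these summands are independent and uniformly sub-Gaussian (Theorem~\ref{thm:L-MV} and \eqref{eq:L2-rhoht}), the exponential Chebyshev inequality concludes. The exponential-tightness step absorbs in one stroke both the time-uniformity and the escape-of-mass issues --- precisely the two obstacles you handle by hand via the Lyapunov truncation and the time-grid/modulus-of-continuity argument.

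What each approach buys: the paper's route is shorter, uses no $\epsilon$-nets, and plugs directly into the full LDP of Section~\ref{sec:LD}; yours is more self-contained and explicit. Two small technical slips in your sketch: Hoeffding's inequality is for bounded summands, so for the Brownian maxima $\sup_{s\in[t_k,t_{k+1}]}|B_i(s)-B_i(t_k)|$ you need sub-Gaussian concentration (via the reflection principle) rather than Hoeffding; and L\'evy's modulus of continuity is an almost-sure statement about a single path, not a concentration tool for averages, so it does not play the role you assign it there.
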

This proposition will be proven in Subsection 3.2 below.

We have now the tools to prove convergence to the hydrodynamic limit.
\begin{theorem} 
 \label{thm:HDL}
Let $\rho_0$ be a nice initial profile. 
 Under Assumption~\eqref{eq:C} the distribution $\bbP_T^N$ of $\mu^N [0,T]$, on 
 $\sfC \lb [0,T] , \bbM_1 (\bbR\times\Tod )\rb$ 
 converges 
 to $\delta_{\rho_\cdot (x , \theta ){\rm e}^{-{2}\psi (\theta )}\dd x\dd\theta}$ in the 
 following sense: For any $\epsilon >0$ and $T<\infty$, 
 \be{eq:HDL} 
 \lim_{N\to\infty} 
 \bbP_T^N\lb \max_{t\in [0, T]}\dd_{\mathsf{LP}}
\lb \mu^N_t , \rho_t (x, \theta ) {\rm e}^{-{2}\psi (\theta )}\dd x\dd\theta \rb \geq \epsilon \rb = 0.
 \ee
\end{theorem}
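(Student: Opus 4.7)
The plan is to combine the two propositions of the section via the classical change–of–measure (entropy) inequality: for any probability measures $\bbP \ll \bbQ$ on a common space and any measurable event $A$,
\begin{equation*}
\bbP (A) \,\leq\, \frac{\log 2 + \calH \lb \bbP \, \big|\, \bbQ\rb}{\log \lb 1 + 1/\bbQ (A)\rb } .
\end{equation*}
I would apply this with $\bbP = \bbP_T^N$, $\bbQ = \hat\bbP_T^N$, and $A$ equal to the event
\begin{equation*}
A_\epsilon^N \,:=\, \Bigl\{\max_{t\in [0,T]} \dd_{\mathsf{LP}}\lb \mu^N_t , \rho_t (x , \theta )\eee^{-2\psi (\theta )}\dd x \dd\theta\rb \geq \epsilon \Bigr\}
\end{equation*}
whose probability we wish to control. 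The event $A_\epsilon^N$ is measurable with respect to the path $\utheta[0,T] \in \sfC ([0,T], \bbR^{N^d})$ because $\mu^N_t$ is a continuous functional of the trajectory; consequently the entropy inequality can be applied at the level of the path–space measures on which Proposition~\ref{thm:A} provides the relative entropy estimate.

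Next I would substitute the two ingredients. From Proposition~\ref{thm:conc-PN-hat}, $\hat\bbP_T^N (A_\epsilon^N) \leq \eee^{-N^d C_T (\epsilon )}$ for all $N$ sufficiently large, so that
\begin{equation*}
\log \lb 1 + 1/\hat\bbP_T^N (A_\epsilon^N)\rb \geq N^d C_T (\epsilon ) + \mathrm{o}(1) .
\end{equation*}
From Proposition~\ref{thm:A}, $\calH (\bbP_T^N\, |\, \hat\bbP_T^N) = \mathrm{o}(N^d)$ under Assumption~\eqref{eq:C}. Plugging both into the entropy inequality yields
\begin{equation*}
\bbP_T^N (A_\epsilon^N) \leq \frac{\log 2 + \mathrm{o}(N^d)}{N^d C_T (\epsilon ) + \mathrm{o}(1)} \xrightarrow[N\to\infty]{} 0 ,
\end{equation*}
which is exactly the claim \eqref{eq:HDL}.

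The main conceptual point to verify carefully, rather than a true obstacle, is the bookkeeping of the various pushforwards. Propositions~\ref{thm:A} and \ref{thm:conc-PN-hat} are phrased in terms of measures on different spaces: the former on $\sfC ([0,T], \bbR^{N^d})$ (path space of the coupled vs.\ decoupled diffusions) and the latter on $\sfC ([0,T], \bbM_1 (\Tod\times\bbR ))$ (path space of empirical profiles). Since both $L_T^N$ and $\mu^N[0,T]$ are deterministic continuous images of $\utheta[0,T]$, the event $A_\epsilon^N$ lifts unambiguously to path space; the data processing inequality for relative entropy (equivalently, monotonicity under pushforwards) shows that the path–space entropy bound dominates the entropy between the pushed-forward empirical–profile laws, so the inequality above is legitimate in either formulation. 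No further ingredients are needed beyond the two propositions already stated.
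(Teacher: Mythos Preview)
Your proof is correct and follows exactly the same route as the paper: apply the entropy inequality $\bbP(A) \leq \bigl(\log 2 + \calH(\bbP\,|\,\bbQ)\bigr)/\log\bigl(1+1/\bbQ(A)\bigr)$ with $\bbP=\bbP_T^N$, $\bbQ=\hat\bbP_T^N$ and $A=A_\epsilon^N$, then feed in Proposition~\ref{thm:conc-PN-hat} for the denominator and Proposition~\ref{thm:A} for the numerator. Your additional remarks on measurability and the data-processing inequality are more explicit than the paper's treatment but do not change the argument.
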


\begin{proof} 
The entropy inequality (c.f. \cite{Yau91})  states that for any event $A$ 
or, more generally, for any random  variable $X$, 
\be{entropy.1}
 \bbP_T^N \lb A\rb  \leq 
 \frac{\log 2 + \calH \lb \bbP^N_T \, \big|\,  \hat\bbP^N_T\rb}
 {\log\lb 1+ 1/ \hat \bbP_T^N (A)\rb},
 \ee
 respectively
 \be{entropy.1.1}
 \bbE_T^N \lb X \rb \leq \calH \lb \bbP^N_T \, \big|\,  \hat\bbP^N_T\rb +
 \log \hat\bbE^N_T \lb {\rm e}^{X}\rb .
\ee
Using this with $A = A_N$ the event considered in 
\eqref{eq:conc-PN-hat}, and inserting the 
assertions of Proposition~\ref{thm:A} and 
Proposition~\ref{thm:conc-PN-hat} into the right-hand side of
this inequality immediately yields \eqref{eq:HDL}. Namely, by Proposition~\ref{thm:conc-PN-hat}, 
 $\log\lb 1+ 1/ \hat \bbP_T^N (A_N)\rb \geq N^d C_T (\epsilon )$, and 
 \eqref{eq:ClaimA} applies. 
\end{proof}

It remains to prove Proposition~\ref{thm:A} and Proposition~\ref{thm:conc-PN-hat}. This is the content of
 Subsection~\ref{sub:thmA} and Subsection~\ref{sub:thmconc-PN-hat}, respectively. 
 
\subsection{Proof of Proposition~\ref{thm:A}} 
\label{sub:thmA}
Set 
\be{eq:ent-H}
 h^i (\utheta ) = \frac{1}{N^d} \sum_j J\lb \frac{j-i }{N}\rb  \theta_j .
\ee
By Girsanov's formula, 
 \be{plop.8}
  \calH \lb \bbP^N_t \, \big| \, \hat\bbP^N_t\rb = \calH  \lb f_0^N\big | \rho_0^N\rb 
  + \frac{1}{2}\bbE_t^N\sum_i \int_0^t \lb h^i (\utheta_s ) - h^i_s\rb^2 \dd s .
 \ee
 Hence, 
 \be{eq:der-H}
  \frac{\dd }{\dd t} \calH \lb \bbP^N_t \, \big|\,  \hat\bbP^N_t\rb = 
  \frac{1}{2}\bbE_t^N\sum_i  \lb h^i (\utheta_t ) - h^i_t\rb^2 . 
 \ee
Since  $(\rho , h)$ is a strong solution to \eqref{eq:L-MF-MV}, both $\rho$ and $h$ are
continuous in $x$. Recall that $J$ is also assumed to be continuous. It follows that
\be{plop.8-1}
  h^i_t =  \lbr \frac{1}{N^d} \sum_j 
  J\lb \frac{j-i }{N}\rb  \hat\bbE_T^N \lb \theta_j (t )\rb \rbr  + \smo{1} 
  = \hat\bbE_T^N \lb h^i (\utheta_t )\rb  +\smo{1}, 
\ee
uniformly in $i\in\TodN$. 
Define 
\be{eq:X-var}
\eta_t^i = \theta_i (t ) - \hat\bbE_T^N  (\theta_i (t ) ) 
\quad {\rm and}\quad X_t = \frac{1}{N^{2d} }\sum_{i,j} K\lb\frac{i-j }{N}\rb
\eta_t^i\eta_t^j, 
\ee
where 
\be{plop.9}
 K\lb\frac{i-j }{N}\rb = \frac{1}{N^d}\sum_\ell J\lb \frac{i-\ell}{N}\rb J\lb \frac{j-\ell}{N}\rb .
\ee
Using  \eqref{plop.8-1} to approximate  $h_t^i$, we infer that with the 
notation above, 
\be{plop10-1}
 \bbE_t^N\sum_i  \lb h^i (\utheta_t ) - h^i_t\rb^2 = \bbE_t^N \lb N^d X_t \rb + N^d\smo{1} .
\ee
Consequently, 
\eqref{eq:der-H} reads as
\be{plop.10}
 \frac{\dd }{\dd t} \calH \lb \bbP^N_t \, \big|\,  \hat\bbP^N_t\rb = 
 \frac{1}{2}\bbE_t^N \lb N^d X_t \rb + N^d\smo{1} .
\ee
By the entropy inequality {\eqref{entropy.1}},  for any $\delta >0$, 
\be{eq:ent-bound} 
\frac{\dd }{\dd t} \calH \lb \bbP^N_t \, \big|\,  \hat\bbP^N_t\rb \leq 
\frac{1}{2\delta} \calH \lb \bbP^N_t\,  \big| \, \hat\bbP^N_t\rb + 
{\frac{1}{2\delta}}\log\hat\bbE_t^N\lb {\rm e}^{\delta N^d X_t}\rb + N^d\smo{1} .
\ee
Under $\hat \bbP_t^N$ variables $\eta_t^i$ in \eqref{eq:X-var} are 
independent and centred. Furthermore, by \eqref{eq:L2-rhoht} and Theorem~\ref{thm:L-MV}
the densities (with respect to ${\rm e}^{-{2}\psi (\theta )}$) $q_t^i (\theta )$ of $\eta_t^i$ 
satisfy the following property: There exists a finite constant 
$C = C (t, \rho_0 )<\infty $ such that
\be{eq:max-L2} 
\max_{i}\| q_t^i\|_{2, \psi} \leq C .
\ee
In such circumstances the following holds.
\begin{lemma}
 \label{lem:small-delta} 
 For each $t>0$ there exists $\delta >0$ such that
 \be{eq:small-delta} 
 \lim_{N\to\infty}\frac{1}{N^{d}} \log\hat\bbE_t^N\lb {\rm e}^{\delta N^d  X_t}\rb = 0,
 \ee
 uniformly in $t\leq T$. 
\end{lemma}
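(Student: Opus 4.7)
The plan rests on the observation that the kernel $K$ has, by its very definition \eqref{plop.9}, a built-in square-root structure: writing $J_\ell(i) \df J\bigl((i-\ell)/N\bigr)$, one has
\[
 N^d X_t \,=\, \frac{1}{N^d}\sum_{i,j}\Bigl(\sum_\ell J_\ell(i) J_\ell(j)\Bigr)\eta_t^i\eta_t^j \,=\, \sum_\ell \bigl(Z_t^\ell\bigr)^2 ,
\quad Z_t^\ell\df \frac{1}{N^d}\sum_i J_\ell(i)\,\eta_t^i .
\]
Hence $N^d X_t$ is a \emph{nonnegative} quadratic form in the independent centred $\eta_t^i$. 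I would then linearise the squares using the Hubbard--Stratonovich identity $\eee^{\delta z^2}=\bbE_G\bigl[\eee^{\sqrt{2\delta}\,Gz}\bigr]$, introducing iid standard Gaussians $(G_\ell)$ independent of $(\eta_t^i)$. Fubini and independence yield
\[
 \hat\bbE_t^N\bigl[\eee^{\delta N^d X_t}\bigr]
 \,=\, \bbE_G\Bigl[\prod_i M_i\bigl(\sqrt{2\delta}\,Y_i\bigr)\Bigr],
\quad Y_i\df\frac{1}{N^d}\sum_\ell G_\ell J_\ell(i),
\]
with $M_i(\lambda)\df\hat\bbE_t^N[\eee^{\lambda\eta_t^i}]$. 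Under $\bbP_G$ the vector $(Y_i)$ is centred Gaussian with covariance $\frac{1}{N^d}K\bigl((i-j)/N\bigr)$, in particular $\Var(Y_i)\leq C/N^d$.

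Next I would quantify $M_i$. Because $\eta_t^i$ is centred and its density $q_t^i$ w.r.t.\ $\eee^{-2\psi}$ satisfies \eqref{eq:max-L2}, all moments of $\eta_t^i$ are bounded uniformly in $i$ and in $t\leq T$, so a Taylor expansion around $0$ gives a universal $\lambda_0>0$ and $c_1<\infty$ with
\[
 \log M_i(\lambda)\,\leq\, c_1\lambda^2 \quad\text{for all } |\lambda|\leq\lambda_0 .
\]
For $|\lambda|>\lambda_0$ I would instead use Cauchy--Schwarz against $\|q_t^i\|_{2,\psi}$ together with the growth hypothesis \eqref{eq:psi-con1} (with $k\geq 2$), yielding $\log M_i(\lambda)\leq c_2|\lambda|^{2k/(2k-1)}+c_3$ uniformly. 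Splitting on the event $E\df\{\max_i \sqrt{2\delta}|Y_i|\leq\lambda_0\}$:
on $E$ the quadratic bound gives $\prod_i M_i(\sqrt{2\delta}Y_i)\leq \exp\bigl(2c_1\delta\sum_i Y_i^2\bigr)$, and $\sum_i Y_i^2=\tfrac{1}{N^d}G^{\!\top}AG$ with $A_{ij}=K\bigl((i-j)/N\bigr)$, so the Gaussian quadratic moment generating function evaluates as $\det\bigl(I-4c_1\delta A/N^d\bigr)^{-1/2}$; since $\|A/N^d\|_{\mathrm{op}}$ is bounded (Toeplitz/convolution norm $\leq \hat K_0$) and $\mathrm{tr}(A)/N^d=K(0)=O(1)$, for $\delta$ small enough this determinant is $\exp\bigl(O(\delta\,\mathrm{tr}(A)/N^d)\bigr)=\exp(O(1))$, i.e.\ $\smo{N^d}$ after taking logarithm and dividing by $N^d$.

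The only genuine obstacle is controlling the contribution on $E^{\mathrm{c}}$. Here I would apply Cauchy--Schwarz in the $\bbP_G$-expectation: a standard Gaussian tail and union bound give $\bbP_G(E^{\mathrm{c}})\leq N^d\eee^{-c' N^d/\delta}$ (super-exponentially small in $N^d$), while the \emph{a priori} bound $M_i(\lambda)\leq C\eee^{C\lambda^2}$ (valid for all $\lambda$, combining the two regimes above) propagates to $\bbE_G\prod_i M_i(\sqrt{2\delta}Y_i)^2\leq C^{2N^d}\det(I-8C\delta A/N^d)^{-1/2}$. Choosing $\delta$ small enough that $c'/\delta>4\log C$ makes the product of $\sqrt{\bbP_G(E^{\mathrm{c}})}$ with the square root of this bound tend to zero exponentially in $N^d$, hence negligible. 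Combining both parts one obtains $\hat\bbE_t^N\bigl[\eee^{\delta N^d X_t}\bigr]=O(1)$ uniformly in $t\in[0,T]$; uniformity of all constants in $t$ is inherited from Lemma~\ref{prop:L2} together with the uniform bound on $h^i_s$ provided by Theorem~\ref{thm:L-MV} and Lemma~\ref{lem:Sup-Phin}. Dividing by $N^d$ yields \eqref{eq:small-delta}. The main technical hurdle is precisely the balancing on $E^{\mathrm{c}}$: one must show that the Gaussian rareness of large $Y_i$ beats the only sub-Gaussian-after-$|\lambda|^{2k/(2k-1)}$ growth of $M_i$, which is why $k\geq 2$ (strictly super-quadratic $\psi$) is essential.
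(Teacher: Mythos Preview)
Your argument is correct but follows a genuinely different route from the paper. The paper does \emph{not} exploit the positive-semidefinite structure of $K$; instead it proves a more general statement valid for \emph{any} kernel with $\max_{i,j}|K(i,j)|\leq 1$: after recording the uniform sub-Gaussian bound $\hat\bbE_t^N\eee^{\alpha\eta_t^i}\leq\eee^{\sigma_t\alpha^2}$ and the uniform bound $\hat\bbE_t^N\eee^{\kappa(\eta_t^i)^2}\leq\eee^{g_t(\kappa)}$, the paper peels off one variable at a time, integrating out $\eta_n$ via Cauchy--Schwarz and absorbing the resulting rank-one perturbation of the kernel back into the supremum over kernels. This yields the recursion $\fra_n(\delta)\leq\fra_{n-1}(\delta)\eee^{\frac{1}{2}g(2\delta/n)}$, and since $g$ is continuous with $g(0)=0$ the telescoped product satisfies $\frac{1}{n}\log\fra_n(\delta)\to 0$.

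Your approach, by contrast, uses the specific factorisation $K=\frac{1}{N^d}J*\tilde J$ to write $N^dX_t$ as a sum of squares and then linearises via Hubbard--Stratonovich. This is elegant and gives an explicit determinantal bound, but it would not go through for a general bounded kernel. One remark: your case split on $E$ versus $E^{\mathrm c}$ is not actually needed. The same $L^2$-bound plus centredness already gives a \emph{global} sub-Gaussian estimate $M_i(\lambda)\leq\eee^{\sigma\lambda^2}$ without prefactor (absorb the additive constant $\log C$ into $\sigma\lambda^2$ on $|\lambda|\geq 1$, and use the Taylor bound on $|\lambda|<1$); with this in hand one gets directly $\prod_i M_i(\sqrt{2\delta}Y_i)\leq\eee^{2\sigma\delta\sum_iY_i^2}$ and the whole expectation is a single Gaussian determinant $\det(I-4\sigma\delta A/N^d)^{-1/2}=O(1)$. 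Thus the strictly sub-quadratic growth $|\lambda|^{2k/(2k-1)}$ you highlight, and hence the emphasis on $k\geq 2$, is not where the leverage lies.
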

The claim of Theorem~\ref{thm:A} is now  straightforward. 
It remains to prove Lemma~\ref{lem:small-delta}. 

\begin{proof}[Proof of Lemma~\ref{lem:small-delta}.] 
In view of our basic assumption \eqref{eq:psi-con1}, the uniform 
bound \eqref{eq:max-L2} implies that random 
variables $\eta_t^i$ are uniformly sub-Gaussian: there 
exists $\sigma = \sigma_t <\infty$, such that,
for any $\alpha\in \bbR$, 
\be{eq:uni-SG} 
\max_{i\in{\TodN}} {\hat\bbE_t^N}\lb {\rm e}^{\alpha \eta_t^i} \rb \leq {\rm e}^{\sigma_t \alpha^2} .
\ee
Furthermore, there exists $\kappa_0 >0$ and a finite convex function $g_t $ on $(-\kappa_0 , \kappa_0 )$ 
with $g_t (0) = 0$ 
such that,  for any $\abs{\kappa} \leq \kappa_0$, 
\be{eq:uni-kappa-square} 
\max_{i\in {\TodN}} {\hat\bbE_t^N} \lb {\rm e}^{\kappa ( \eta_t^i)^2 } \rb \leq {\rm e}^{g_t (\kappa ) } .
\ee 
Since we care only about small $\delta$ in \eqref{eq:small-delta} we can rescale 
both the variables $\eta_t^i\mapsto \epsilon \eta_t^i$
and the kernel $K \mapsto \epsilon K$ and assume that \eqref{eq:uni-SG} holds with $\sigma_t =1$, 
that \eqref{eq:uni-kappa-square} holds with $\kappa_0 =1$ and also assume that $\max_x \abs{K (x )} \leq 1$. 
Then, \eqref{eq:small-delta} is a consequence of the following, ostensibly more general, statement: 
Let $g$ be a finite convex function on $[-1, 1]$ with $g (0) = 0$. 
Let $\eta_1, \eta_2, \dots , $ be independent centred random variables such that, for any $\alpha\in \bbR$
and for any $\abs{\kappa} \leq 1$, 
\be{eq:eta-cond} 
\sup_{{i\in \bbN}} \bbE {\rm e}^{\alpha\eta_i } \leq {\rm e}^{\alpha^2}\quad {\rm and}\quad 
\sup_{{i\in \bbN}} \bbE {\rm e}^{\kappa\eta_i^2  } \leq {\rm e}^{ g (\kappa )}
\ee
Finally let $K(i, j)$ be a matrix satisfying 
\be{eq:K-cond} 
\sup_{i, j\in\bbN}\abs{K (i, j )} \leq 1 .
\ee
Then 
\be{eq:small-delta-gen}
\limsup_{n\to\infty}\frac{1}{n} \log\lb \bbE {\rm e}^{\frac{\delta}{n}
\sum_{i, j=1}^n K(i, j )\eta_i\eta_j}\rb 
\leq 0.
\ee
for all $\delta$ sufficiently small. 

Indeed, define 
\be{eq:an-delta} 
\fra_n (\delta ) = \sup_{0\leq \nu \leq \delta}\sup_{\max\abs{K (i, j )}\leq 1} 
\bbE {\rm e}^{\frac{\nu}{n}\sum_{i, j=1}^n K(i, j )\eta_i\eta_j} .
\ee
Since for any kernel $K$, 
\be{plop.11}
 \bbE {\rm e}^{\frac{\nu}{n}\sum_{i, j=1}^n K(i, j )\eta_i\eta_j} = 
 \bbE_{\eta_1, \dots \eta_{n-1}} 
 \lb 
 {\rm e}^{\frac{\nu (n-1)}{n}\frac{1}{n-1}\sum_{i, j=1}^{n-1}
 K(i, j )\eta_i\eta_j} 
 \bbE_{\eta_n}
 \lb 
 {\rm e}^{\frac{ \nu K (n, n )}{n}\eta_n^2 + \lb \frac{2 \nu}{n}\sum_1^{n-1} K (i, n)\eta_i\rb \eta_n}
 \rb 
 \rb 
\ee
By Cauchy-Schwarz and  \eqref{eq:eta-cond}, 
\be{plop.12}
 \bbE_{\eta_n}
 \lb 
 {\rm e}^{\frac{ \nu K (n, n )}{n}\eta_n^2 + \lb \frac{2 \nu}{n}\sum_1^{n-1} K (i, n)\eta_i\rb \eta_n}
 \rb \leq 
 {\rm e}^{\frac{1}{2}\lb \frac{4 \nu}{n}\sum_1^{n-1} K (i, n)\eta_i\rb^2} {\rm e}^{\frac{1}{2}g \lb\frac{2\nu}{n}\rb} .
\ee
Consider the $(n-1)\times (n-1)$ kernel 
\be{eq:R-kernel} 
R(i, j ) = \frac{\nu (n-1)}{n} K (i, j ) + K (i, n )K (j, n) \frac{8\nu^2 (n-1)}{n^2} .
\ee
Since by assumption $\max_{i, j}\abs{K (i, j )} \leq 1$, clearly 
$\max_{i, j}\abs{R (i, j )} \leq 1$ as well, 
for all  $\nu$ small enough and uniformly in $n\in\bbN$. 
We therefore conclude that,  for all sufficiently small values of $\delta$, 
\be{eq:an-bound} 
\fra_n (\delta ) \leq \fra_{n-1} (\delta ) {\rm e}^{\max_{0\leq \nu\leq \delta} 
\frac{1}{2}g \lb\frac{2\nu}{n}\rb} 
{\leq\dots \leq a_1 (\delta ){\rm e}^{\frac{1}{2}\sum_{k=2}^n \max_{0\leq \nu\leq \delta} g \lb\frac{2\nu}{k}\rb}}
\ee
Since $g$ is continuous and zero at $0$,  
\be{eq:frac1-n-sum} 
\lim_{n\to\infty} \frac{1}{n}\sum_{{k}=2}^n \max_{0\leq \nu\leq \delta} g \lb\frac{2\nu}{{k}}\rb = 0. 
\ee
Hence \eqref{eq:small-delta-gen} holds. 
\end{proof}
\subsection{Proof of Proposition~\ref{thm:conc-PN-hat}} 
\label{sub:thmconc-PN-hat}
\eqref{eq:conc-PN-hat} is a rough bound. 
In Section \ref{sec:LD} below we shall discuss sharp large deviation estimates 
based on martingale techniques \cite{KO90}, 
 see also Section~4.2.1 in \cite{GA04}.

If $\rho_0$ is a nice initial 
profile
then, for any $T<\infty$,  the sequence of distributions 
$\hat\bbP_T^N$ of $L_T^N$ on 
 $\bbM_1 \lb \Tod\times \sfC \lb [0,T] ,\bbR\rb \rb$ and, consequently of 
 $\mu^N [0,T]$ on $\sfC \lb [0,T] , \bbM_1 (\bbR\times\Tod )\rb$ is exponentially 
 tight. 
 
 Indeed fix $C<\infty$ and consider the family $\calF_{C, T}$ of 
 one-dimensional diffusions 
 \be{eq:FC-fam} 
 \dd \theta (t ) = \dd B (t ) - \psi^\prime\lb \theta (t )\rb \dd t + h_t\dd t , 
 \ee
 with initial condition $\bbP\lb \theta (0) \in\dd \theta\rb = \rho (\theta ){\rm e}^{-{2}\psi (\theta )}$, 
 such that $h_t$ is smooth and 
 \be{eq:FC-cond} 
 \max_{0\leq t\leq T}\abs{h_t} \leq C\quad {\rm and}\quad \|\rho\|_{2, \psi} \leq C .
 \ee
 We can parametrise elements of  $\calF_{C, T}$ in terms of distributions $\bbP^{ h, \rho}_T$ on 
 $\bbM_1 \lb \sfC \lb [0,T] ,\bbR\rb \rb$, where $(h , \rho )$ satisfies \eqref{eq:FC-cond}. 
 We shall record
 this as $(h , \rho )\in \calF_{C, T}$. Then the family $\lbr \bbP^{ h, \rho}_T\rbr_{(h , \rho )\in \calF_{C, T}}$
 is uniformly tight, that is, for any $\epsilon > 0$, there exists a compact subset 
 $K_\epsilon\subset \sfC \lb [0,T] ,\bbR\rb $, such that 
 \be{eq:FC-uniform} 
 \sup_{(h , \rho )\in \calF_{C, T}} \bbP^{ h, \rho}_T\lb K_\epsilon^c\rb \leq \epsilon .
 \ee
 {Indeed, if $h\equiv 0$ and the initial density $\rho$ satisfies the second bound in 
 \eqref{eq:FC-cond}, then uniform tightness follows directly from Section~8 of \cite{Bil99}, Cauchy-Schwarz 
 and translation invariance of Brownian motion. The general case of $h$ satisfying the first bound 
 in \eqref{eq:FC-cond}
 is then incorporated using H\"{o}lder's inequality.}
 
 If $\rho_0$ is a nice initial profile, then, by Theorem~\ref{thm:L-MV}, the family 
 $\lbr \bbP^{h^x , \rho_0 (x, \cdot )}\rbr_{x\in\bbT^d}$ is a subset of $\calF_{C_T , T}$ for every 
 $T<\infty$. Since $\bbT^d$ is compact, 
 we can proceed as in the proof of exponential tightness for Sanov's theorem on Polish 
 spaces in \cite{DZ10}. 
 
 Once exponential tightness is established, it remains to derive weak large deviation upper bounds. 
 Let $\rho_0$ be a nice initial profile and let $\rho$ be the classical solution to the 
 local McKean-Vlasov equation 
 \eqref{eq:L-MF-MV}.  We have to check that, for any $\epsilon >0$, we can find 
 $\chi_T (\epsilon )>0$, such
 that the 
 following holds:
 
 Let $\mu = \mu [0,T ]\in \sfC \lb [0,T] , \bbM_1 (\bbR\times\Tod )\rb$ is such that  (recall \eqref{eq:LP-met})
 \be{eq:D-dist-eps}
  \sfD_T \lb \mu , \rho_\cdot  (x, \theta ) {\rm e}^{-{2}\psi (\theta )}
  \dd x\dd\theta \rb  
  := \max_{t\in [0, T]}\dd_{\mathsf{LP}}
\lb \mu_t , \rho_t (x, \theta ) 
{\rm e}^{-{2}\psi (\theta )}\dd x\dd\theta \rb \geq 3\epsilon .
 \ee
Then, 
\be{eq:weak-ub} 
\limsup_{\delta\to 0}\limsup_{N\to\infty}
\frac{1}{N^d} \log \hat\bbP_T^N \lb  
\sfD_T \lb \mu ,\mu^N  \rb \leq \delta \rb \leq -\chi_T (\epsilon ). 
\ee
Indeed, in  light  of all the information which we have already collected, 
\eqref{eq:weak-ub} is just a simple concentration upper bound. If 
$\sfD_T \lb \mu , \rho_\cdot  (x, \theta ) {\rm e}^{-{2}\psi (\theta )}
  \dd x\dd\theta \rb \geq 3\epsilon$, then there exists $t\in [0,T]$ 
  and $f\in \mathsf{Lip}\lb \Tod\times \bbR \rb$, such that 
\be{plop.13}
\left| \int f (x , \theta )\mu_t \lb \dd x,\dd\theta \rb -
\int f (x , \theta ) 
\rho_t  (x, \theta ) {\rm e}^{-{2}\psi (\theta )}   \dd x,\dd\theta  
\right| \geq 2\epsilon .
\ee
Since $\rho$ is continuous (as a strong solution to \eqref{eq:L-MF-MV}) in $x$,
we conclude that for $\delta < \epsilon$ and $N$ sufficiently large, the event 
$\lbr \sfD_T \lb \mu ,\mu^N  \rb \leq \delta\rbr$ is included in 
\be{eq:f-event} 
\lbr \left| \frac{1}{N^d}\sum_{i\in \TodN} 
\lb f \lb \frac{i}{N}, \theta_i (t )\rb - \hat\bbE^N_T 
f \lb \frac{i}{N}, \theta_i (t )\rb\rb \right| > \epsilon .
\rbr 
\ee
As in the case of \eqref{eq:uni-SG}, under ${\hat \bbP_T^N}$ the family of
centred random variables 
\be{eq-xi-fam}
\lbr f \lb \frac{i}{N}, \theta_i (t )\rb - \hat\bbE^N_T 
f \lb \frac{i}{N}, \theta_i (t )\rb 
\rbr_{f\in \in \mathsf{Lip}\lb \Tod\times \bbR \rb , t\in [0,T] , N\in \bbN , 
i\in\TodN} 
\ee
 is 
uniformly sub-Gaussian, and \eqref{eq:weak-ub}  follows by the exponential 
Chebyshev inequality. 

\section{Propagation of Chaos.} 
\label{sec:PC}
For the remaining two sections we shall fix 
a nice initial profile $\rho_0$ and 
assume that  the initial density $f^N_0$ 
is in the product form, that is 
\be{eq:D}
\text{
$f_0^N = \rho^N_0$, where $\rho^N_0$ is given by 
\eqref{eq:rho-N} } 
\tag{D}
\ee
Given $k$ distinct points 
$x_1, \dots , x_k\in \Tod$, let $\calP^N_{T ; x_1 , \dots , x_k}$ be the $\bbP_T^N$-marginal 
distribution  on $\sfC \lb [0,T] , \bbR^k \rb$  of $k$ coordinates 
$\lb \theta_{i_1} [0,T] , \dots , \theta_{i_k}[0,T]\rb$, where, for $\ell =1, \dots , k$ 
we set 
$i_\ell = \lfloor N x_\ell \rfloor$.  

Consider the (unique) classical solution $\lb \rho , h\rb$ to \eqref{eq:L-MF-MV}, and let 
$\hat\theta_{x_1}, \dots , \hat\theta_{x_k}$ be independent diffusions, 
\be{eq:theta-l} 
\dd \hat\theta_{i_\ell} (t ) =  \lb h^{x_\ell}(t) - \psi^{\prime} \lb \hat
\theta_{{i_\ell}} (t )\rb\rb \dd t +\dd B_{i_\ell} (t )
\ee
with initial densities $\rho_0 (x_1 , \theta ){\rm e}^{-{2}\psi (\theta )}, \dots , 
\rho_0 (x_k , \theta ){\rm e}^{-{2}\psi (\theta )}$. 
We use $\hat \calP^N_{T ; x_1 , \dots , x_k}$ for  their product distribution 
on $\sfC \lb [0,T] , \bbR^k \rb$.  
\begin{theorem} 
\label{thm:chaos} 
For any nice initial profile $\rho_0$, for any $k = 1, 2, \dots $ points $x_1, \dots , x_k \in \Tod$ , 
and for any finite $T$, 
\be{eq:chaos} 
\lim_{N\to\infty} \calH\lb \calP^N_{T ; x_1 , \dots , x_k}\big| {\hat\calP}^N_{T ; x_1 , \dots , x_k}\rb = 0. 
\ee
\end{theorem}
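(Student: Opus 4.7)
The strategy is to compute the $k$-marginal relative entropy via a Girsanov-type representation. The key observation is that under $\bbP^N_T$, by the innovation (projection) theorem, the marginal process $(\theta_{i_1}(t),\dots,\theta_{i_k}(t))_{t\in[0,T]}$ is itself an Ito diffusion in its natural filtration $\calF^S_t$, with drift $-\psi^\prime(\theta_{i_\ell}) + \tilde h^{i_\ell}_t$ where
\begin{equation*}
\tilde h^{i_\ell}_t \df \bbE^{\bbP^N}\bigl[h^{i_\ell}(\utheta_t)\bgiven \calF^S_t\bigr],\qquad h^{i_\ell}(\utheta)=\frac{1}{N^d}\sum_j J\bigl(\tfrac{j-i_\ell}{N}\bigr)\theta_j,
\end{equation*}
and with independent Brownian martingale parts (by L\'evy's characterization, since the quadratic variations are unchanged by the projection). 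The reference law $\hat\calP^N_{T;x_1,\dots,x_k}$ is by definition the product of $k$ diffusions with the smooth deterministic drifts $-\psi^\prime(\hat\theta) + h^{x_\ell}_t$ from the strong McKean--Vlasov solution.

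Applying Girsanov's theorem to compare these two laws on $\sfC([0,T],\bbR^k)$ yields
\begin{equation*}
\calH\bigl(\calP^N_{T;x_1,\dots,x_k}\bgiven\hat\calP^N_{T;x_1,\dots,x_k}\bigr)
= \calH_0^N + \frac12\sum_{\ell=1}^k\int_0^T\bbE^{\bbP^N}\bigl[(\tilde h^{i_\ell}_t - h^{x_\ell}_t)^2\bigr]\,\dd t,
\end{equation*}
with initial term $\calH_0^N=\sum_\ell \calH(\rho_0(i_\ell/N,\cdot)\,|\,\rho_0(x_\ell,\cdot))$ arising from the product structure imposed by Assumption \eqref{eq:D}. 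Conditional Jensen bounds the integrand by $\bbE^{\bbP^N}[(h^{i_\ell}(\utheta_t) - h^{x_\ell}_t)^2]$, so it suffices to show (i) $\calH_0^N\to 0$ and (ii) the pointwise-in-$t$ convergence $\bbE^{\bbP^N}[(h^{i_\ell}(\utheta_t) - h^{x_\ell}_t)^2]\to 0$, after which dominated convergence on $[0,T]$ finishes the job.

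Item (i) is immediate from the continuity of the nice profile $\rho_0(\cdot,\cdot)$ in $L^2(\bbR,\eee^{-2\psi})$ together with the smoothness of $\rho_0$ at the limit points $x_\ell$, since $i_\ell/N\to x_\ell$. For (ii), $h^{i_\ell}(\utheta_t)=\int J(y-i_\ell/N)\theta\,\mu^N_t(\dd y,\dd\theta)$ is a continuous bounded linear functional of the empirical measure, so the hydrodynamic limit (Theorem \ref{thm:HDL}) combined with the continuity of $x\mapsto h^x_t$ gives $h^{i_\ell}(\utheta_t)\to h^{x_\ell}_t$ in $\bbP^N$-probability. To upgrade to $L^2$, I would establish uniform moment bounds $\sup_{N,\,t\le T,\,i}\bbE^{\bbP^N}[|\theta_i(t)|^p]<\infty$ for some $p>2$ by a Lyapunov/energy estimate: applying Ito's formula to $\theta_i^{2q}$, the coercive contribution $-\theta\psi^\prime(\theta)\lesssim -\theta^{2k}$ produces a superlinear dissipation $-cM(t)^{\alpha}$ with $\alpha>1$ (here $M(t)=\max_i\bbE^{\bbP^N}[\theta_i(t)^{2q}]$), while Cauchy--Schwarz controls the mean-field interaction term linearly in $M(t)$, yielding a stable differential inequality.

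The main technical obstacle is precisely this uniform moment control: because every particle's drift is coupled to every other through the local mean-field term, the energy estimate must close uniformly in both $N$ and the particle index, which hinges on the assumption \eqref{eq:psi-con1} that the polynomial potential has degree $2k\ge 4$. Once this is in hand, verifying that $\tilde h^{i_\ell}_t$ is well-defined with finite second moment (needed to legitimize the innovation-theorem step) is automatic, and the remainder of the argument is routine Girsanov calculus combined with the already-established hydrodynamic limit.
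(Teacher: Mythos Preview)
Your approach is valid but differs from the paper's in two places, and contains one minor imprecision worth flagging.

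\textbf{Reduction to the full-drift error.} You reach the bound
$\calH\bigl(\calP^N\,\big|\,\hat\calP^N\bigr)\le \calH_0^N+\tfrac12\sum_\ell\int_0^T\bbE^N_T\bigl[(h^{i_\ell}(\utheta_t)-h^{x_\ell}_t)^2\bigr]\dd t$
via the innovation theorem plus conditional Jensen. The paper obtains the same bound more cheaply: it introduces an auxiliary full-system law $\hat\bbP^N_{T;x_1,\dots,x_k}$ in which the $k$ tagged particles follow the decoupled SDE \eqref{eq:theta-l} while \emph{all remaining particles still follow} \eqref{eq:SDE-N}. The $k$-marginal of this law is $\hat\calP^N$, so by the data-processing inequality $\calH(\calP^N\,|\,\hat\calP^N)\le \calH(\bbP^N_T\,|\,\hat\bbP^N_{T;x_1,\dots,x_k})$, and a single Girsanov on the full system gives exactly the $k$-term sum above (with no initial contribution, since both laws share $\rho_0^N$). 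This bypasses the filtering machinery and the regularity checks it requires.

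\textbf{Control of $\bbE^N_T[(h^{i}(\utheta_t)-h^{x}_t)^2]$.} Here the two arguments diverge more substantially. You propose direct Lyapunov moment bounds $\sup_{N,t,i}\bbE^N_T[|\theta_i(t)|^p]<\infty$ under the coupled measure to get uniform integrability. This works, and your sketch (It\^o on $\theta_i^{2q}$, coercivity $-\theta\psi'(\theta)\lesssim -\theta^{2k}$, Young on the interaction) closes correctly since $k\ge 2$. The paper instead inserts the cutoff $\varphi_R$ of \eqref{eq:var-phi-R}, handles the bounded-Lipschitz part by Theorem~\ref{thm:HDL}, and controls the tail $g^i_R(\utheta_t)$ by the entropy inequality \eqref{entropy.1.1}, which transfers the expectation from $\bbP^N_T$ to $\hat\bbP^N_T$ at the cost of $N^{-d}\calH(\bbP^N_T\,|\,\hat\bbP^N_T)$ --- already shown to vanish in Proposition~\ref{thm:A}. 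Under $\hat\bbP^N_T$ the particles are independent with $L^2_\psi$-bounded densities, so the tail is trivial. The paper's route thus recycles Proposition~\ref{thm:A} and avoids any new a~priori estimate under $\bbP^N_T$; yours is more self-contained but needs the extra moment computation.

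\textbf{One imprecision.} You call $\mu\mapsto\int J(y-i/N)\theta\,\mu(\dd y,\dd\theta)$ a ``continuous bounded linear functional'' of the empirical measure. It is neither bounded nor continuous for $\dd_{\mathsf{LP}}$, since $\theta$ is unbounded; Theorem~\ref{thm:HDL} only tests against $\mathsf{Lip}(\bbT^d\times\bbR)$. You implicitly fix this with the moment bounds (giving uniform integrability), but the sentence as written is incorrect --- this is precisely why both proofs need either a cutoff or a tightness mechanism.
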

\begin{proof} 
 Let $\hat\bbP^N_{T, x_1 , \dots , x_k}$ be the distribution of the coupled family of diffusions 
 $\utheta (t ) = \lbr \theta_i (t)\rbr_{i\in\bbT^d}$ with initial product distribution 
 $\rho^N_0$, such that {the} following statements hold:
 
 \noindent 
 a. If $i = i_l = \lfloor N x_\ell \rfloor$, then $\theta_i$ satisfies SDE \eqref{eq:theta-l}. 
 
 \noindent 
 b. Otherwise, $\theta_i$ satisfies \eqref{eq:SDE-N}. 
 
 By construction, the $\hat\bbP^N_{T, x_1 , \dots , x_k}$-marginal  
 distribution of $\lb \theta_{i_1}, \dots , \theta_{i_k}\rb$ is exactly $\hat \calP^N_{T ; x_1 , \dots , x_k}$. 
 Hence 
 \be{plop.1}
  \calH\lb \calP^N_{T ; x_1 , \dots , x_k}\big| \, \hat \calP^N_{T ; x_1 , \dots , x_k}\rb \leq 
  \calH\lb \bbP^N_{T}\big| \hat \bbP^N_{T ; x_1 , \dots , x_k}\rb .
 \ee
We shall proceed with deriving a vanishing, as $N\to\infty$, 
upper bound on  the latter entropy. By Girsanov's formula, 
\be{eq:Girs-chaos} 
\calH\lb \bbP^N_{T}\big|\,  \hat \bbP^N_{T ; x_1 , \dots , x_k}\rb = 
\frac{1}{2}\sum_{\ell=1}^{k}\int_0^T \bbE^N_T \lb h^{x_\ell}(t) - h^{i_\ell}\lb \utheta_t\rb\rb^2 \dd t, 
\ee
recall the definition \eqref{eq:ent-H} of $h^i (\utheta )$. 

All the above terms have the same form, so it is  enough to consider the case $k=1$. 
Let $x\in \Tod$ and $i =  \lfloor N{x}  \rfloor$. 
For $R>0$, consider the  cutoff, $\varphi_R (\theta )$,
of $\theta$, given by
\be{eq:var-phi-R} 
\varphi_R (\theta ) = \theta\1_{\abs{\theta}\leq R} +R\1_{\theta >R } - R\1_{\theta <{-} R} .
\ee
By \eqref{eq:L-MF-MV} , 
\be{plop.2}
 h^x (t) = \int_{\Tod}\int_{\bbR} J (y-x )\rho_t (y , \theta )\theta {\rm e}^{-{2}\psi (\theta )}\dd\theta\dd y = 
 h^x_R (t) + g^x_R (t ) , 
\ee
where  
\be{hRgR} 
h^x_R (t) := \int_{\Tod}\int_{\bbR} J (y-x )\rho_t (y , \theta )\varphi_R (\theta )
{\rm e}^{-{2}\psi (\theta )}\dd\theta\dd y .
\ee
Similarly, 
\be{eq:hRgR-theta} 
\begin{split}
h^{{i}}\lb \utheta_t\rb &= \int_{\Tod}\int_{\bbR} J \lb \frac{i}{N} -y \rb \varphi_R (\theta )
\mu^N_t (\dd y, \dd \theta ) + 
\frac{1}{N^d}\sum_{j\in\TodN} J \lb \frac{j-i}{N} \rb \lb \theta_j (t ) - 
\varphi_R \lb  \theta_j (t )\rb\rb \\ 
& := h^{{i}}_R\lb \utheta_t\rb +  g^{{i}}_R\lb \utheta_t\rb .
\end{split}
\ee
The function 
\be{plop.3}
 f^x_R  (y, \theta ) = \frac{1}{R\|J\|_\infty \lb \|\nabla J\|_\infty\vee 1\rb} J (x - y )\varphi_R (\theta ) 
\ee
belongs to $\mathsf{Lip}\lb \Tod\times \bbR \rb$. Hence, Theorem~\ref{thm:HDL} implies that 
\be{eq:lim-1} 
\lim_{N\to\infty} \frac{1}{2}\int_0^T \bbE^N_T \lb h^{x}_R(t) - h^{i}_R\lb \utheta_t\rb\rb^2 \dd t = 0. 
\ee
{
In turn, 
\be{eq:g-1} 
\lb g^{x}_R (t)\rb^2 \leq \max_{y} J^2 (y ) \int_{\Tod}\int_\bbR 
\1_{\abs{\theta}>R} \theta^2 \rho_t (y , \theta ) {\rm e}^{-2\psi_0 (\theta ) }\dd\theta\dd y , 
\ee
and, consequently, by \eqref{eq:L2-rhoht} and the uniform boundedness of $h$, 
\be{eq:lim-2} 
\lim_{R\to\infty} \int_0^T  \lb g^{x}_R (t)\rb^2 \dd t = 0.  
\ee
}
Finally,  
\be{eq:lim-3}
\bbE^N_T\lb g^{{i}}_R\lb \utheta_t\rb\rb^2 \leq 
\frac{\|J\|_\infty^2}{N^d}
\bbE^N_T \lb 
\sum_{j\in\TodN} 
\lb \theta_j (t ) - \varphi_R\lb \theta_j (t)\rb\rb^2 
\rb.
\ee 
By the entropy inequality, 
\be{eq:lim-3-1}\begin{split}
 \bbE^N_T \lb 
\frac{1}{N^d}\sum_{j\in\TodN} 
\lb \theta_j (t ) - \varphi_R\lb \theta_j (t)\rb\rb^2 
\rb &\leq \frac{1}{N^d}\calH\lb \bbE^N_T \, \big|\,  \hat\bbE^N_T\rb \\ 
&+  
\frac{1}{N^d}\sum_{j\in\TodN} \log\lb \int_\bbR \rho_t\lb \frac{j}{N} , \theta\rb {\rm e}^{-{2}\psi (\theta ) + 
(\theta - \varphi_R (\theta ))^2 }\rb \dd\theta  . \end{split}
\ee
By Proposition~\ref{thm:A},  the first term on the right hand side of \eqref{eq:lim-3-1} tends
to zero as $N\to\infty$. On the other hand, if a density $\rho$ is such that $\|\rho\|_{2, \psi} <\infty$,
then 
\be{plop.4}
 \int_\bbR \rho (\theta ) {\rm e}^{-{2}\psi (\theta ) + (\theta - \varphi_R (\theta )^2}\dd \theta 
 \leq 1 + \|\rho\|_{2, \psi}\sqrt{\int_{\abs{\theta}>R} {\rm e}^{-{2}\psi (\theta ) + 2\theta^2}\dd\theta} . 
\ee
By \eqref{eq:max-L2}, the norms $\| \rho_t (x , \cdot )\|_{2, \psi}$ are uniformly 
bounded in $x\in \Tod$ and 
$t\in [0,T ]$. By our assumption on $\psi$ , 
\be{plop.5}
 \lim_{R\to\infty} \int_{\abs{\theta}>R} {\rm e}^{-{2}\psi (\theta ) + 2\theta^2}\dd\theta = 0, 
\ee
Hence, 
\be{plop.6}
 \lim_{R\to\infty}\lim_{N\to\infty} \bbE^N_T\lb g^{i_\ell}_R\lb \utheta_t\rb\rb^2 = 0, 
\ee
which concludes the proof. 
\end{proof}

\section{Large deviations} 
\label{sec:LD}
Large deviations for a rather  general class of locally mean-field type
models were investigated in \cite{Patrick} via 
a careful adaptation of ideas and
techniques which were originally introduced by Dawson and G{{\"a}}rtner 
\cite{DawGarLDP87, DawGartAMSc89}.  It seems, however, that in a 
particular case we consider here, our results on the existence and uniqueness of 
strong solutions to the system \eqref{eq:L-MF-MV} and, accordingly, 
on hydrodynamic limits towards these strong solutions, pave the way to for a
simpler and more transparent proof of the large deviation principle for 
the law $\bbP^N_T$ of the empirical measure $\mu^N = \mu^N [0,T]$ on 
 $\sfC \lb [0,T] , \bbM_1 (\bbR\times\Tod )\rb$, which relies on martingale 
 techniques of \cite{Com87, KO90}, see also Section~4.2.1 of \cite{GA04}
 for a very clear exposition of the method. Below we sketch the 
 corresponding argument. As, however, explained in the 
 concluding Subsection~\ref{sub:LD-strat} there is 
 an approximation issue still to be settled. 

 \subsection{Exponential tightnes} 
 \label{sub:exp-t} 
 Recall that exponential tightness for the decoupled family $\hat\bbP_T^N$ was 
 already established in Subsection~\ref{sub:thmconc-PN-hat}. Following the notation 
 introduced in Subsection~\ref{sub:thmA} define
 \[ 
  \calN_t^N =\sum_i \int_0^t \lb h_i (\utheta_s ) - h^i_s\rb \dd B_i (s ) .
 \]
Then, ${\rm e}^{q\calN_t^N -\frac{q^2}{2}\la \calN_t^N\ra} $ is a 
$\hat \bbP^N$-martingale for any $q\in \bbR$. Let $A\subset 
\sfC \lb [0,T] , \bbM_1 (\bbR\times\Tod )\rb$ be a measurable 
subset.  
Pick positive $q,p$ and $r$ such that $\frac{1}{q} +\frac{1}{p}+ 
\frac{1}{r} = 1$. By Girsanov's formula, 
and then by H\"{o}lder's  inequality, 
\be{eq:PN-tight-bound} 
\begin{split}
\bbP^N_T (A) 
& = \hat\bbE_T^N\lb \1_A {\rm e}^{ \calN_T^N -\frac{1}{2}\la \calN_T^N\ra}\rb  
 = \hat\bbE_T^N\lb \1_A {\rm e}^{ \calN_T^N -\frac{q}{2}\la \calN_T^N\ra} 
{\rm e}^{\frac{q-1}{2}\la \calN_T^N\ra} \rb \\
&\leq \sqrt[p]{\hat\bbP^N_T (A)} 
\sqrt[q]{\hat\bbE_T^N {\rm e}^{q\calN_T^N -\frac{q^2}{2}\la \calN_T^N\ra}} 
\sqrt[r]{ \hat\bbE_T^N {\rm e}^{\frac{r(q-1)}{2}\la \calN_T^N\ra}} 
= \sqrt[p]{\hat\bbP^N_T (A)} 
\sqrt[r]{ \hat\bbE_T^N {\rm e}^{\frac{r(q-1)}{2}\la \calN_T^N\ra}}
\end{split}
\ee
So, if $K_\gamma\subset \sfC \lb [0,T] , \bbM_1 (\bbR\times\Tod )\rb$ is a compact 
subset satisfying $\hat\bbP_T^N \lb K_\gamma^{\sfc}\rb \leq {\rm e}^{-N^d\gamma}$, then 
\be{eq:PN-gamma} 
\bbP_T^N \lb K_\gamma^{\sfc} \rb 
\leq 
{\rm e}^{-\frac{\gamma}{p}N^d}
\sqrt[r]{ \hat\bbE_T^N {\rm e}^{\frac{r(q-1)}{2}\la \calN_T^N\ra}} .
\ee
Therefore, it remains to check that there exist $\delta >0$ and $C<\infty$ such that
\be{eq:delta-C-bound} 
\hat\bbE_T^N {\rm e}^{\delta\la \calN_T^N\ra} \leq {\rm e}^{C N^d} .
\ee
This follows from (a much stronger statement of) Lemma~\ref{lem:small-delta}. 

We have proved: 
\begin{lemma} 
 \label{lem:exp-tight} 
 If $\rho_0$ is  a nice initial profile in the sence of 
 Definition~\ref{mkprob.2}, then the law $\bbP_T^N$ of 
 $\mu^N$ on $\sfC \lb [0,T] , \bbM_1 (\bbR\times\Tod )\rb$ is exponentially 
 tight for any $T \geq 0$. 
\end{lemma}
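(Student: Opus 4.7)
The plan is to assemble the three ingredients just laid out preceding the lemma statement: (i) exponential tightness of the decoupled law $\hat\bbP^N_T$, which was obtained in Subsection~\ref{sub:thmconc-PN-hat} from the uniform tightness of the diffusion family \eqref{eq:FC-uniform} together with compactness of $\Tod$; (ii) the Girsanov--H\"older tilt \eqref{eq:PN-tight-bound}, which bounds $\bbP^N_T (K^{\sfc}_\gamma)$ in terms of $\hat\bbP^N_T (K^{\sfc}_\gamma)$ and an exponential moment of $\frac{r(q-1)}{2}\la \calN^N_T\ra$ under $\hat\bbP^N_T$; and (iii) the exponential moment bound \eqref{eq:delta-C-bound}. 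With (i) and (ii) already in hand, the whole lemma reduces to verifying (iii).

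To prove (iii), I would first identify the quadratic variation $\la \calN^N_T\ra = \sum_i \int_0^T \lb h^i (\utheta_s ) - h^i_s\rb^2 \dd s$ with a time integral of the quadratic form $X_s$ from \eqref{eq:X-var}. Using the approximation \eqref{plop.8-1} (which is valid because $\rho$, $h$ and $J$ are continuous in the spatial variable), one obtains $\la \calN^N_T\ra = N^d \int_0^T X_s \dd s + N^d\, \smo{1}$, with the $\smo{1}$ term uniform in the path $\utheta$. Applying Jensen's inequality for the exponential with respect to the probability measure $\dd s / T$ on $[0,T]$ and then Fubini yields
\[
\hat\bbE^N_T \, {\rm e}^{\delta \la \calN^N_T\ra} \leq \frac{{\rm e}^{N^d \smo{1}}}{T}\int_0^T \hat\bbE^N_T\, {\rm e}^{\delta T N^d X_s}\, \dd s .
\]
At this point Lemma~\ref{lem:small-delta} supplies the pointwise-in-$s$ bound $\hat\bbE^N_T\, {\rm e}^{\delta T N^d X_s} \leq {\rm e}^{C N^d}$ for all sufficiently small $\delta T$; its hypotheses, namely the uniform sub-Gaussianity \eqref{eq:uni-SG} and the exponential square-moment bound \eqref{eq:uni-kappa-square}, follow from the $L^2$-control \eqref{eq:max-L2} (a consequence of Lemma~\ref{prop:L2}) together with the polynomial growth assumption \eqref{eq:psi-con1} on $\psi$. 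Uniformity of these constants in $s\in [0,T]$ is automatic from the boundedness on $[0,T]$ of the McKean--Vlasov drift guaranteed by Theorem~\ref{thm:L-MV}.

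The step I expect to be most delicate is the bookkeeping of the H\"older exponents $p, q, r$ with $1/p + 1/q + 1/r = 1$ in \eqref{eq:PN-tight-bound}: one must take $q$ close enough to $1$ that $r(q-1)/2$ lies inside the admissible range of $\delta$ for Lemma~\ref{lem:small-delta}, while simultaneously keeping $p$ finite so that the factor ${\rm e}^{-\gamma N^d /p}$ retains usable decay. Once such a triple has been fixed and $\gamma$ is taken large relative to $C/r$, the Girsanov bound produces compact sets $K_\gamma \subset \sfC \lb [0,T] , \bbM_1 (\bbR\times\Tod )\rb$ with $\bbP^N_T (K^{\sfc}_\gamma) \leq {\rm e}^{-N^d \gamma'}$ for arbitrarily large prescribed $\gamma'$, which is the required exponential tightness.
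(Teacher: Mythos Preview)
Your approach is correct and mirrors the paper's exactly: transfer exponential tightness from $\hat\bbP^N_T$ via the Girsanov--H\"older bound \eqref{eq:PN-tight-bound}, and reduce everything to \eqref{eq:delta-C-bound}, which is then extracted from Lemma~\ref{lem:small-delta}. Two small corrections are worth recording. First, the identity $\langle\calN^N_T\rangle = N^d\int_0^T X_s\,\dd s + N^d\,\smo{1}$ does \emph{not} hold with a path-uniform remainder, because the cross term coming from \eqref{plop.8-1} depends on $\utheta$; what is true and sufficient is the one-sided bound $\langle\calN^N_T\rangle \leq 2N^d\int_0^T X_s\,\dd s + N^d\,\smo{1}$, obtained from $(a-\epsilon)^2\leq 2a^2+2\epsilon^2$. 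Second, your bookkeeping concern is more than cosmetic: the constraint $1/p+1/q+1/r=1$ with $p,q,r>1$ forces $r(q-1)>q>1$, so $r(q-1)/2$ can never be made small, and Lemma~\ref{lem:small-delta} \emph{as stated} (for sufficiently small $\delta$) is not directly applicable. The resolution---what the paper means by ``a much stronger statement''---is that since $\psi$ has degree $2k\geq 4$, the centred variables $\eta_t^i$ satisfy $\hat\bbE^N_T\eee^{\kappa(\eta_t^i)^2}<\infty$ for \emph{every} $\kappa>0$ uniformly in $i,t$, so the proof of Lemma~\ref{lem:small-delta} (or the cruder eigenvalue bound $N^d X_s\leq C_J\sum_i(\eta_s^i)^2$) yields $\hat\bbE^N_T\eee^{\delta N^d X_s}\leq \eee^{C_\delta N^d}$ for all $\delta>0$.
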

\subsection{The rate function and the result} 
\label{sub:rate-f} 
In order to write down an expression for the LD rate function we need
to introduce some additional notation. Let 
$\sfC^{2,0,1}_b\lb \bbR\times\bbT^d\times\bbR\rb$ be the family of bounded 
and continuous 
(with corresponding derivatives) functions $\lb t, x, \theta\rb \mapsto f_t (x , \theta )$. 
Let $\lb\cdot , \cdot\rb_\psi $ be the scalar product of, depending on the context, 
either $\bbL_2 \lb \bbR , {\rm e}^{-{2}\psi (\theta )}\dd\theta \rb$ or 
$\bbL_2 \lb \bbT^d\times\bbR , {\rm e}^{-{2}\psi (\theta )}\dd x\dd\theta \rb$.
 
 For 
 $r_t (x , \theta ) {\rm e}^{-{2}\psi (\theta )} \dd x\dd\theta \in 
 \sfC \lb [0,T] , \bbM_1 (\bbR\times\Tod )\rb$ and 
 $f\in \sfC^{2,0,1}_b\lb \bbR\times\bbT^d\times\bbR\rb$ consider 
 \be{eq:L-form} 
 \calL_T \lb r\, |\,  f \rb := \lb r_T , f_T\rb_\psi - 
 \lb r_0 , f_0\rb_\psi - \int_0^T 
  \lb r_t , \lb \partial_t + L_{h^r (t, x)} \rb f_t \rb_\psi \dd t . 
 \ee
 Above, 
 \be{eq:L-h-x} 
 L_{h (t, x, \theta )} = \frac{1}{2}{\rm e}^{\psi}\partial_\theta\lb  {\rm e}^{-{2}\psi}
 \partial_\theta\rb + h(t, x, \theta )\partial_\theta , 
 \ee
and  
\be{eq:h-r} 
h^r (t, x)  = \int_{\bbT^d} \int_0^\infty J (y -x) \eta r_t (y , \eta ) 
{\rm e}^{-{2}\psi (\eta)}\dd\eta \dd y .
\ee
For any 
$f\in \sfC^{2,0,1}_b$ 
we can extend $\calL_T$ by continuity 
 to  measures $R$ 
which do not have densities with respect to $\dd x\dd\theta$. 
In this way $R \mapsto \calL_T ( R\, |\, f )$  is viewed as a continuous {non-linear} functional on all of
$\sfC \lb [0,T] , \bbM_1 (\bbR\times\Tod )\rb$.

{
For $R\in \sfC \lb [0,T] , \bbM_1 (\bbR\times\Tod )\rb$ define the functional 
$A_T$ via: 
 \be{eq:A-T}
 A_T (R )  = 
 \sup_{f\in \sfC^{2,0,1}_b}\lbr \calL_T ( R \, |\,  f ) - \frac{1}{2}\int_0^T\lb \int_\bbR\int_{\bbT^d} 
 (\partial_\theta f_t )^2 (x ,\theta ) R_t  (\dd x , \dd \theta )\rb \dd t\rbr . 
 \ee 
 }
 Since for any $f\in\sfC^{2,0,1}_b$ the map  
 $R\to \calL_T (R \, |\, f)$ is  {continuous},  the functional 
 $A_T$ is   {lower-semicontinuous}. 
 If $R\neq r_t (x, \theta ){\rm e}^{-{2}\psi (\theta )}\dd\theta \dd x$, then it is easy to check 
 that $A_T (R )= \infty$.  Otherwise, if $r$ is a density of $R$, we shall write 
 $A_T ( r )$ instead of $A_T (R )$.

 As in \cite{KO90} one concludes that if $A_T (r ) <\infty$ then there exists
 a drift field  
 \[ 
b_t (x , \theta ) \in \bbL_{2}\lb \bbR\times\bbT^d\times\bbR , 
r_t (x , \theta ){\rm e}^{-{2}\psi (\theta )}
\dd t\dd x\dd\theta \rb , 
\] 
such that 
\be{eq:L-b} 
\calL_T (r \, |\, f) = \int_0^T \lb  r_t , b_t  \partial_\theta f_t \rb_\psi \dd t,
\ee
which means that 
\be{eq:AT-b} 
A_T (r ) = \frac{1}{2} \int_0^T \lb r_t , b_t^2 \rb_\psi \dd t,\ 
\text{whenever $A_T (r )<\infty$} .
\ee
\begin{remark} 
\label{rem:b-g}
{Since we are working with one dimensional spins, we can always represent $b_t = \partial_\theta g_t$, which defines $g$ up to 
an addition of  $\theta$-independent functions of $(t, x)$.  }
\end{remark}
\begin{theorem} 
 \label{thm:LD} 
 Assume that $\rho_0$ is  a nice initial profile {and assume that the initial density $f_0^N$ satisfies \eqref{eq:D}}. Then the law 
 $\bbP_T^N$ of $\mu^N$ on the space 
 $\sfC \lb [0,T] , \bbM_1 (\bbR\times\Tod )\rb$ 
 satisfies a large deviation principle with rate $N^d$ and with rate function 
 \be{eq:rate-mu-N} 
 I_T (R ) = 
 \begin{cases} 
 &\infty,\ \text{if $R\neq r_t (x, \theta ){\rm e}^{-{2}\psi (\theta )}\dd\theta \dd x$} \\
 & A_T (r ) + \calH\lb r_0\,  \big|\, \rho_0 \rb, \ \text{otherwise}. 
 \end{cases}
 \ee 
\end{theorem}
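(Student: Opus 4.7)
\subsection*{Proof proposal for Theorem~\ref{thm:LD}.}

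The plan is to carry out the standard martingale-based derivation of an LDP, in the spirit of \cite{Com87,KO90} and Section~4.2.1 of \cite{GA04}, adapted to the local mean-field setting developed in Sections~\ref{sec:MF}--\ref{sec:PC}. Exponential tightness on $\sfC\lb [0,T], \bbM_1(\bbR\times\bbT^d)\rb$ is already supplied by Lemma~\ref{lem:exp-tight}, so it suffices to establish matching weak upper and lower bounds around each trajectory $R = r_t(x,\theta){\rm e}^{-2\psi(\theta)}\dd x\dd\theta$, plus the identification of the rate as $I_T$. The key tool is the family of exponential martingales produced by applying Ito's formula to $\int f_t\, d\mu^N_t$ for $f\in \sfC^{2,0,1}_b$: one finds that, with $h^{\mu^N}$ inserted in the definition of $\calL_T$,
\[
Z^N_T(f) := \exp\Bigl\{ N^d\bigl[\calL_T(\mu^N\,|\,f) - \tfrac{1}{2}\int_0^T \textstyle\int (\partial_\theta f_t)^2\, d\mu^N_t\, dt\bigr]\Bigr\}
\]
is a mean-one martingale under $\bbP^N_T$.

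For the upper bound on a closed set $F$, I would fix $R\in F$, pick $f$ almost optimising the variational formula defining $A_T(R)$, and combine $\bbE^N_T[Z^N_T(f)]=1$ with a continuity argument on a small neighbourhood of $R$ to deduce that $\bbP^N_T(\text{nbhd of }R)\leq \exp\lb -N^d[A_T(R)-\epsilon]\rb$. The initial-condition contribution $\calH(r_0\,|\,\rho_0)$ is obtained by factoring out the dependence on $\mu^N_0$ and applying an $x$-inhomogeneous Sanov theorem to the independent product initial data \eqref{eq:D}, using the continuity in $x$ granted by the nice profile $\rho_0$.

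For the lower bound I would, given $r$ with $I_T(r)<\infty$, use Remark~\ref{rem:b-g} to write the optimal drift as $b_t(x,\theta) = \partial_\theta g_t(x,\theta)$ for suitably regularised $g$ and consider the tilted dynamics
\[
\dd\tilde\theta_i(t) = \bigl[-\psi'(\tilde\theta_i) + h^r(t,i/N) + b_t(i/N,\tilde\theta_i)\bigr]\dd t + \dd B_i(t),
\]
started from an initial product density approximating $r_0$. The entropy/concentration scheme of Propositions~\ref{thm:A}--\ref{thm:conc-PN-hat} applies verbatim with $r$ in place of the McKean-Vlasov solution $\rho$, so $\mu^N$ concentrates on $r$ under the tilted law $\tilde\bbP^N_T$. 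Girsanov then yields $\calH(\tilde\bbP^N_T\,|\,\bbP^N_T) = N^d\bigl[\tfrac12\int_0^T \langle r_t,b_t^2\rangle_\psi\dd t + \calH(r_0\,|\,\rho_0)\bigr] + \smo{N^d}$, with the mean-field cross term $h^r - h^{\tilde\mu^N}$ absorbed via propagation of chaos (Theorem~\ref{thm:chaos}), and the usual entropy inequality \eqref{entropy.1} converts this into the matching lower bound.

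The hard part, which is precisely the approximation issue flagged at the end of Subsection~\ref{sub:LD-strat}, is that the map $R\mapsto h^R(t,x) = \int J(y-x)\eta\, R_t(\dd y, \dd\eta)$ is \emph{not} continuous on $\sfC([0,T],\bbM_1(\bbR\times\bbT^d))$: it probes the first moment in $\theta$, which is not controlled by the unit-norm Lipschitz metric $\sfD_T$ on our path space. I would address this by first replacing $\theta$ by the cutoff $\varphi_M(\theta)$ of \eqref{eq:var-phi-R} throughout $h$ and $\calL_T$ to obtain a truncated functional $I_T^M$ for which the pairings appearing in the variational formula are genuinely continuous, proving the LDP in the truncated setting, and finally letting $M\to\infty$. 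The uniform tightness needed for the passage is furnished by the $L^2\lb\bbR,\eee^{-2\psi}\rb$ bounds of Lemma~\ref{prop:L2} combined with the exponential moment estimates of Lemma~\ref{lem:small-delta}, and verifying that $I_T^M$ $\Gamma$-converges to $I_T$ (together with the corresponding upgrade of the concentration bound \eqref{eq:weak-ub}) appears to be the genuinely technical step in closing the gap between the HDL of Section~\ref{sec:HDL} and the full LDP.
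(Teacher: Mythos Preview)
Your overall architecture---exponential tightness from Lemma~\ref{lem:exp-tight}, upper bound via the mean-one exponential martingale $Z^N_T(f)$ indexed by $f\in\sfC^{2,0,1}_b$, lower bound via a Girsanov tilt, and a Sanov contribution from the initial layer---coincides with the scheme the paper lays out in Subsection~\ref{sub:LD-strat}. Note, however, that the paper itself presents only a \emph{sketch} and explicitly flags the approximation issue as ``still to be settled''; so there is no complete proof in the paper to match.

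Where your route diverges is in the lower-bound tilt. You freeze the mean-field drift at its limiting value $h^r(t,i/N)$ and add $b$, producing a \emph{decoupled} system for which concentration is a pure i.i.d.\ LLN; the price is that the Girsanov cost against $\bbP^N_T$ contains the cross term $h^r-h^{\mu^N}$, which you then have to show is $\smo{N^d}$ in relative entropy. The paper instead tilts by $g$ alone (so $\bbQ^{N,g}_T=\bbP^{N,b}_T$ of \eqref{eq:SDE-N-b}), keeping the empirical interaction intact, and then invokes the generalised HDL of Theorem~\ref{thm:GL-MV-HDL} to get concentration at the strong solution $r$ of \eqref{eq:GL-MF-MV}. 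This makes the exponent in \eqref{eq:lower-f} equal $A_T(r)$ on the nose, with no cross term to control; the mean-field self-consistency is absorbed into the modified McKean--Vlasov problem rather than into a separate propagation-of-chaos estimate. Your variant is workable (Lemma~\ref{lem:small-delta} indeed controls the cross term), but the paper's choice is cleaner and is what motivates formulating Theorem~\ref{thm:GL-MV-HDL} in the first place.

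On the approximation gap the two strategies also differ. You propose truncating $\theta\mapsto\varphi_M(\theta)$ inside $h$ to restore continuity of $R\mapsto h^R$, proving an LDP with truncated rate $I_T^M$, and then $\Gamma$-converging as $M\to\infty$. The paper instead argues that, once the upper bound is in place, it suffices to show that strong solutions $(r,h^r)$ of \eqref{eq:GL-MF-MV} with smooth $b$ satisfying \eqref{eq:f-bound-new} are \emph{dense on the graph of $A_T$}; via \eqref{eq:Lnot-form}--\eqref{eq:AT-not} this decouples in $x$ and reduces to the one-dimensional parabolic approximation problem \eqref{eq:Cond-r-epsilon-red}. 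Both approaches leave a genuine technical step open; the paper's reduction to a $1$D divergence-form PDE question is arguably more concrete than a $\Gamma$-convergence statement on path-measure space, but neither is carried out in full.
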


\subsection{Local mean-field systems with spatially dependent drifts} 
If $A_T (r ) <\infty$, then \eqref{eq:L-b} reads as follows: 
For any $f\in \sfC^{2,0,1}_b\lb \bbR\times\bbT^d\times\bbR\rb$, 
\be{eq:h-b-syst} 
\lb r_T , f_T\rb_\psi - 
 \lb r_0 , f_0\rb_\psi - \int_0^T 
  \lb r_t , \lb \partial_t + L_{h^r (t, x)} + b_t (x, \theta )\partial_\theta 
  \rb f_t \rb_\psi \dd t = 0.
\ee 
Recall \eqref{eq:h-r} how the drift $h^r$ is related to $r$.
In this way, 
\eqref{eq:h-b-syst} is a weak form of a consistent local mean-field 
family of Fokker-Plank equations, and  
the couple $\lb r , h^r\rb$ can be interpreted as a
weak solution to a local mean-field McKean-Vlasov system with an additional
spatially dependent drift $b$.  

Here is the corresponding strong formulation along the lines of 
Definition~\ref{mkprob.1}: 
\begin {definition}\label{def:SLG}
 Given a smooth space-time drift field 
 $b_t (x , \theta )$ and an initial density $\rho_0 (x , \theta )$ a strong 
 solution of the McKean-Vlasov equation is a pair 
 $(\rho,h)$, with $\rho \in \BB$ and $h\in \AA$, 
such that 
 \be{eq:GL-MF-MV}
 \forall\ x\in\Tod\quad \text{and}\ \forall\ t\in 
 [0,\infty) 
 \  
 \begin{cases} 
 &\rho_t (x , \cdot ) = \rho_t^{h^x + b^x}\ 
 \text{with initial condition $\rho_0 (x , \cdot )$} \\
 &\quad {\rm and} \\
 &h^x(t)  = \int\int J(y- x) \rho_t (y , \eta )\eta \eee^{-{2}\psi (\eta )}
 \dd\eta\dd y 
 \end{cases}
 \tag{GL-MV} 
 \ee
 Above $b^x (t, \theta ) = b_t (x, \theta )$ and for any smooth field 
 $u (t , \theta )$ the symbol $\rho_t^u$ stands for the density (under a tacit assumption 
 that it is well defined) with respect
 to ${\rm e}^{-{2}\psi}$ of the one-dimensional diffusion 
 \be{eq:dif-g}
 \dd\theta (t ) = \lb -\psi^\prime (\theta (t)) + u (t, \theta ( t))\rb \dd t
 + \dd B (t ) .
 \ee
 \end{definition}
 
The proofs of  Theorem~\ref{thm:L-MV} and Theorem~\ref{thm:HDL} 
were based on the  a priori bound \eqref{eq:f-bound}. 
The following generalisations of these theorems 
are more or less straightforward: 

 \begin{theorem} 
 \label{thm:GL-MV-HDL}
 Assume that  for all $x\in \bbT^d$ the field $b_t (x , \theta ) := \partial_\theta g_t (x , \theta )$ is smooth 
 in $(t, \theta )$ . Furthermore, assume that  for every $t >0 $
 \be{eq:f-bound-new} 
 \sup_{x\in\bbT^d} \max_{B , s\leq t} 
 \lbr \frac{1}{2} \psi^{\prime\prime} (B) - \partial_s g_s (x , B ) - \lb \psi^\prime (B ) 
  -b_s (x, B )\rb^2 \rbr <\infty 
 \ee
 Let 
 $\rho_0 (x ,\cdot)$ be  a nice initial profile. 
 Then, for any $T >0$,  there exists a 
 { unique strong} 
 solution $(\rho,h)$
 of the 
 system \eqref{eq:GL-MF-MV}. \\
 Furthermore, consider the modified system of coupled diffusions
 \be{eq:SDE-N-b}
\dd \theta^N_i (t) = -\lb \psi^\prime \lb \theta^N_i (t )\rb + b_t\lb \frac{i}{N} , 
\theta^N_i (t)\rb + 
\frac{1}{N^d}\sum_{j\in \TodN}
J\lb\frac{j-i}{N}\rb\theta^N_j (t )\rb \dd t +\dd B_i (t)  ,  \quad i\in \TodN,
\ee
 and let the empirical measure $\mu^N_t$ be defined by \eqref{empdef.1}.
  Let  $\bbP_T^{N, b}$ denote the distribution of $\mu^N [0,T]$
  under the dynamics \eqref{eq:SDE-N-b}. 
 Then, 
 under Assumption~\eqref{eq:D} on the product structure of the initial distribution $f^N_0$,  
  $\bbP_T^{N, b}$ 
 converges 
 to $\delta_{\rho_\cdot (x , \theta ){\rm e}^{-{2}\psi (\theta )}\dd x\dd\theta}$ in the 
 following sense: For any $\epsilon >0$ and $T<\infty$, 
 \be{eq:HDL-b} 
 \lim_{N\to\infty} 
 \bbP_T^{N,b}\lb \max_{t\in [0, T]}\dd_{\mathsf{LP}}
\lb \mu^N_t , \rho_t (x, \theta ) {\rm e}^{-{2}\psi (\theta )}\dd x\dd\theta \rb \geq \epsilon \rb = 0.
 \ee
 \end{theorem}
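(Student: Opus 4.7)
The plan is to re-run the two-stage argument of Sections~\ref{sec:MF} and~\ref{sec:HDL} with the generator $L_{h(t)}$ replaced by $L_{h(t)} + b(t,\cdot)\partial_\theta$, the gradient structure $b=\partial_\theta g$ being used to rewrite the Girsanov exponent, and the a priori bound \eqref{eq:f-bound-new} serving as the substitute for \eqref{eq:f-bound}. The first step is to extend Lemma~\ref{lem:prop-dens} to the diffusion
\[
\dd\theta(t) = \lb h(t) - \psi'(\theta(t)) + b(t,\theta(t))\rb\dd t + \dd B(t),
\]
by running the Rogers--Girsanov computation \eqref{rogers.1}--\eqref{eq:Heat-k}. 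Beyond the It\^o expansion of $\psi_R(B)$ and the integration by parts for $\int h(s)\dd B(s)$ used in \eqref{girs.3}--\eqref{girs.4}, one now also applies It\^o to $g_s(B(s))$ to rewrite $\int_0^t b_s(B_s)\dd B_s$ as a boundary contribution $g_t(B_t) - g_0(\theta)$ minus $\int_0^t \partial_s g_s(B_s)\dd s$ and a second-order remainder. The outcome is a Brownian-bridge representation of the same form as \eqref{eq:qth}--\eqref{eq:phith-BM}, with a modified Feynman--Kac exponent $F$ that reduces to \eqref{eq:f-bhhp} when $b\equiv 0$ and whose leading quadratic part is proportional to $(\psi'-h-b)^2$. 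The assumption \eqref{eq:f-bound-new} is precisely what ensures $\sup_{s\leq t, B} F \leq C(t,h,g) < \infty$, the analogue of \eqref{eq:f-bound}; smoothness of the new kernel $\sfq_t^{h+b}$ then follows as in \cite{Rog84}. The $L^2$-bound of Lemma~\ref{prop:L2} and the stability estimate of Lemma~\ref{prop:stability} are re-derived from the Fokker--Planck identity for $L^*_{h(t)+b}$: the additional contribution from $b\partial_\theta$ is handled by integration by parts against $\rho_t$ and absorbed by the dissipative term $-\|\partial_\eta \rho_t\|_{2,\psi}^2$ modulo a Gronwall-controlled multiple of $\|\rho_t\|_{2,\psi}^2$ depending on $b$ and its first derivatives.

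With the generalised heat-kernel analysis in hand, the fixed-point argument of Subsection~\ref{sub:Phi} applies essentially verbatim: the map $\Phi$ is redefined using $\sfq_t^{h+b}$, the a priori bound of Lemma~\ref{lem:Sup-Phin} is re-derived via the comparison result \eqref{eq:dif-bi}--\eqref{eq:theta-i-conc} for one-dimensional diffusions, since the inward drift from $-\psi'$ still dominates the bounded perturbation $b$ at infinity, and the short-time contraction estimate of Lemma~\ref{banach1} is unaffected because the common $b$-drift cancels in the difference $\rho_t^{g+b} - \rho_t^{h+b}$. Iterating as in \eqref{eq:Phi-gh-nstep} produces a unique fixed point $(\rho,h)$; the regularity of $\rho$ in $(t,x,\theta)$ follows from the analogue of Lemma~\ref{lem-deriv-t}, giving the first conclusion of the theorem.

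For the hydrodynamic limit I take as decoupled reference law the product of diffusions
\[
\dd\hat\theta_i(t) = \dd B_i(t) + \lb h_t^i + b_t(i/N,\hat\theta_i(t)) - \psi'(\hat\theta_i(t))\rb\dd t,
\]
whose marginal densities are precisely $\rho_t(i/N,\cdot)$. When Girsanov is applied between \eqref{eq:SDE-N-b} and this reference, the $b$-drift cancels identically and the entropy-production identity \eqref{eq:der-H} reappears verbatim with the same quadratic form $(h^i(\utheta_t) - h_t^i)^2$; Proposition~\ref{thm:A} then follows from Lemma~\ref{lem:small-delta}, which uses only the uniform $L^2$-bound \eqref{eq:max-L2} on the marginal densities re-established in the first paragraph. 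Proposition~\ref{thm:conc-PN-hat} also carries through, since a smooth $b$ preserves uniform sub-Gaussianity of the centred marginals. Combining both inputs through the entropy inequality yields \eqref{eq:HDL-b}. The main obstacle is the first step: verifying that \eqref{eq:f-bound-new}, together with the gradient structure $b=\partial_\theta g$, is strong enough to deliver both the smooth Brownian-bridge kernel and the uniform $L^2$-bound on $\rho_t$ --- in particular, controlling the new $b\partial_\theta$ term in the Fokker--Planck energy estimate without losing either the dissipation $-\|\partial_\eta \rho_t\|_{2,\psi}^2$ or the uniformity in $x\in\bbT^d$. Once these analogues are in place, the remainder of the argument is a careful transcription of Sections~\ref{sec:MF} and~\ref{sec:HDL} with essentially notational changes.
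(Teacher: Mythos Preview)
Your proposal is correct and matches the paper's own treatment: the paper does not give a separate proof of Theorem~\ref{thm:GL-MV-HDL} but states that it is a ``more or less straightforward'' generalisation of Theorems~\ref{thm:L-MV} and~\ref{thm:HDL}, resting on the observation that the a~priori bound \eqref{eq:f-bound-new} plays the role of \eqref{eq:f-bound}. Your sketch fills in precisely the steps the paper leaves implicit --- the It\^o expansion of $g_s(B_s)$ to handle $\int b_s\,\dd B_s$, the cancellation of the common $b$-drift in both the stability estimate and the Girsanov comparison with the decoupled system, and the re-derivation of the $L^2$ and sub-Gaussian bounds --- and correctly identifies the first step (the generalised heat-kernel and energy estimates uniformly in $x$) as the only place where real work is required.
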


 \subsection{Scheme of the proof of the LDP Theorem~\ref{thm:LD} }
 \label{sub:LD-strat}
To simplify notation let us write
\be{eq:eps-sim} 
\lbr \mu^N\stackrel{ \epsilon, +}{\sim} R\rbr = \lbr \max_{t\in [0, T]}\dd_{\mathsf{LP}}
\lb \mu^N_t , R_t (x, \dd\theta )
\dd x\rb \leq \epsilon \rbr
\ee
in case of upper bounds, and
\[
\lbr \mu^N\stackrel{\epsilon , -}{\sim} R\rbr = \lbr \max_{t\in [0, T]}\dd_{\mathsf{LP}}
\lb \mu^N_t , R_t (x, \dd\theta )
\dd x\rb < \epsilon \rb, 
\]
in the case of lower bounds. 

In view of exponential tightness we need to derive asymptotic upper and lower 
bounds on $\bbP_T^N\lb \mu^N\stackrel{\epsilon, \pm}{\sim} R\rb$ 
for  any $R\in \sfC \lb [0,T] , \bbM_1 (\bbR\times\Tod )\rb$ and for small 
$\epsilon >0$. 

Proceeding as in \cite{KO90, GA04}  the upper bound with $A_T$ defined in \eqref{eq:rate-mu-N} 
follows 
by Girsanov's theorem: For any fixed $f\in\sfC^{2,0,1}_b\lb \bbR\times\bbT^d\times\bbR\rb$, 
\be{eq:upper-f}
 \bbP_T^N\lb \mu^N\stackrel{\epsilon , +}{\sim} R\rb \leq 
 {\rm e}^
{ -N^d 
 \lb
 \calL_T (R \, |\, f ) - 
 \frac{1}{2}\int_0^T \int_\bbR\int_{\bbT^d} 
 \lb 
 \partial_\theta f_t\rb^2 
 R_t (x, \dd \theta)\dd x\dd t  \rb 
 (1 + {\rm O}_f (\epsilon ))
  }\;
  \bbQ^{N , f}_T\lb \mu^N\stackrel{\epsilon , +}{\sim} R\rb .
\ee
Above, ${\rm O}_f (\epsilon )$ is a quantity which tends to zero as $\epsilon\to 0$, 
and 
\[
 \frac{\dd \bbQ^{N , f}_T}{\dd \bbP^{N }_T} : = 
 {\rm e}^{N^d \calM_T^{N, f} - \frac{N^{2d}}{2} \la \calM_T^{N, f}\ra} , 
\]
where $\calM_t^{N, f}$ is a $\bbP^N$-martingale, 
\[
 \calM_t^{N, f} = \mu^N_t \lb f_t \rb -\mu^N_0 \lb f_0\rb - 
 \frac{1}{N}\sum_i \int_0^t \lb \partial_s + L_{h^i (\utheta (s))}\rb 
 f_s \lb \frac{i}{N} , \theta_i (s )\rb \dd s .
\]
Recall \eqref{eq:ent-H} and \eqref{eq:L-h} to follow the above notation. 

Optimisation over $f$ in the first term on the right hand side of 
\eqref{eq:upper-f} gives $A_T (R )$. On the other hand, 
\be{eq:zero-LD} 
 \bbQ^{N , f}_T\lb \mu^N\stackrel{\epsilon , +}{\sim} R\rb  \leq 
\bbQ^{N , f}_T\lb \dd_{\mathsf{LP}} \lb \mu^N_0 , R_0\rb \leq \epsilon \rb = 
\bbP^{N}_T\lb \dd_{\mathsf{LP}} \lb \mu^N_0 ,  R_0\rb \leq \epsilon \rb. 
\ee
The last expression is subject to stationary Sanov-type large deviations 
with rate function $\calH\lb R_0\, \big|\, \rho_0\rb$. 
\smallskip 

Let us turn to the lower bound:  
{By general methods it will follow from upper bounds if one is able to prove that there is 
always a unique weak solution to \eqref{eq:h-b-syst}.   Below we sketch an alternative route which is based
on the approach to existence and  uniqueness  of strong solutions  and subsequent derivation of
hydrodynamic limits, as developed in  Section~\ref{sec:MF}--\ref{sec:HDL}, and
formulated in Theorem~\ref{thm:GL-MV-HDL}. }
\smallskip 

\noindent 
\step{0} 
Recall notation \eqref{eq:h-r}. In view of the exponential tightness of the random variables $h^{\mu^N} (t, x)$ by the lower semicontinuity 
of the functional $\rho \mapsto h^{\rho} (t, x)$ with respect to the distance $D_T$, we may assume that $\sup_x \max_{t\leq T} \abs{h^{r} (t, x )} $ is bounded. 
\smallskip

\noindent 
\step{1} 
Assume that 
$R_t = r_t (x, \theta ){\rm e}^{-{2}\psi (\theta )\dd\theta \dd x}$ and 
$(r , h )$ is the unique strong solution to \eqref{eq:GL-MF-MV} 
with a
nice initial profile $r_0$ and 
smooth drift field $b = \partial_\theta g$ 
satisfying \eqref{eq:f-bound-new}. 
As in \eqref{eq:upper-f}: 
\be{eq:lower-f}
 \bbP_T^N\lb \mu^N\stackrel{\epsilon , -}{\sim} R\rb \geq 
 {\rm e}^
{ -N^d 
 \lb
 \calL_T (R \, |\, g ) - 
 \frac{1}{2}\int_0^T \int_\bbR\int_{\bbT^d} 
 \lb 
 \partial_\theta g_t\rb^2 
 R_t (x, \dd \theta)\dd x\dd t  \rb 
 (1 + {\rm O} (\epsilon ))
 }\;
  \bbQ^{N , g}_T\lb \mu^N\stackrel{\epsilon ,-}{\sim} R\rb .
\ee
By our choice of $r$,  the expression $ \calL_T (R \, |\, g ) - 
 \frac{1}{2}\int_0^T \int_\bbR\int_{\bbT^d} 
 \lb 
 \partial_\theta g_t\rb^2 
 R_t (x, \dd \theta)\dd x\dd t $ equals to $A_T (r )$. Furthermore,  with the notation introduced in the
  course of the formulation 
 of Theorem~\ref{thm:GL-MV-HDL},     $ \bbQ_T^{N, g } = \bbP_T^{N, b}$.  Finally,
  assuming that $\calH (r_0 |\rho_0 ) <\infty$, 
 \be{eq:PN-comp} 
 \bbP^{N , b}_T\lb \mu^N\stackrel{\epsilon ,-}{\sim} R\rb = \bbP^{N, b}\lb \prod_i \frac{r_0 (i/N , \theta_i (0))}{\rho_0 (i/N , \theta_i (0))}{\rm e}^{-N^d \frac{1}{N^d}
 \sum_i \log \frac{r_0 (i/N , \theta_i (0))}{\rho_0 (i/N , \theta_i (0))}}  ;  \mu^N\stackrel{\epsilon , -}{\sim} R\rb
 \ee
 The measure $\bbP^{N, b}\lb \prod_i \frac{r_0 (i/N , \theta_i (0))}{\rho_0 (i/N , \theta_i (0))}, \cdot\rb $ is just the distribution of 
 $\mu^N [0,T]$ under the sde \eqref{eq:SDE-N-b} and the product initial distribution sampled from the nice initial profile $r_0$. Under this measure, 
the law of large numbers implies that $\lim_{N\to\infty} \frac{1}{N^d}
 \sum_i \log \frac{r_0 (i/N , \theta_i (0))}{\rho_0 (i/N , \theta_i (0))} = \calH (r_0 |\rho_0 )$. On the other hand, 
 \be{eq:lim-prob-rnot} 
 \lim_{N\to\infty} 
 \bbP^{N, b}_T\lb \prod_i \frac{r_0 (i/N , \theta_i (0))}{\rho_0 (i/N , \theta_i (0))} ;  \mu^N\stackrel{\epsilon ,-}{\sim} R\rb = 1, 
 \ee
 by \eqref{eq:HDL-b}. 
 \smallskip 
 
 It remains to show that strong  solutions $r$ described in S{\small TEP}\,1 are dense on the graph of $A_T$. 
 That is for any $r$ with $A_T (r ) <\infty$ there
 exists a sequence  $\lb r^\epsilon, b^\epsilon \rb $ such that for 
 any $\epsilon$  $\lb r^\epsilon, h^{r^\epsilon}\rb$ is a strong 
 solution to \eqref{eq:GL-MF-MV} (with smooth field $b^\epsilon$ satisfying    \eqref{eq:f-bound-new}), and 
 both $\lim r^\epsilon = r $ and $\lim A_T (r^\epsilon ) = 
 A_T (r )$. 
 \smallskip 
 
 \noindent
 \step{2} Let us go back to the linear form $\calL_T  (r~|~f )$ in \eqref{eq:L-form}.  It could be rewritten as 
 \be{eq:Lnot-form} 
 \calL_T  (r~|~f ) = \calL_T^0   (r~|~f ) - \int_0^T \lb r_t ,h^r_t \partial_\theta f_t \rb_\psi \dd t. 
 \ee
 Hence $b_t = b_t^0 - h_t^r$, where $b_t^0$ satisfies $\calL_T^0   (r~|~f )  = \int_0^T \lb r_t ,b_t^0 \partial_\theta f_t \rb_\psi \dd t$ for any $f\in \sfC^{2,0,1}_b$.  Accordingly, 
 \be{eq:AT-not} 
 \calA_T (r ) = \frac{1}{2}\int_0^T \lb r_t , \lb b_t^0 - h_t^r \rb^2 \rb_\psi \dd t .
 \ee
 Therefore one has to show that there exists a sequence $\lbr b_t^\epsilon \rbr$ such that  for 
 any $\epsilon >0$ it complies with \eqref{eq:f-bound-new} , and in addition, 
 $r^\epsilon := \rho^{b^\epsilon}$ satisfies: 
 \be{eq:Cond-r-epsilon} 
 \forall \, T\ D_T-\lim_{\epsilon\to 0} r^{\epsilon} = r\quad\text{and}\quad 
 \lim_{\epsilon\to 0} = \int_0^T \lb r_t^\epsilon  , \lb b_t^\epsilon \rb^2\rb_\psi \dd t = \int_0^T \lb r_t , \lb b_t^0 \rb^2 \rb_\psi \dd t
 \ee 
 Note that the densities $r_t^\epsilon (\theta , x )$ are completely decoupled, 
 and the question is essentially about one-dimensional
 parabolic PDE-s in divergence form. We proceed to discuss the latter.
 \smallskip 
 
 \noindent
 \step{3} Let $r_t (\theta )$ be a density with respect to ${\rm e}^{-2\psi (\theta )}\dd \theta$. Given $f\in \sfC^{2, 1}\lb \bbR\times\bbR_+ \rb$ we, with a slight abuse of notation, 
 continue using 
 \be{eq:LT-not}
 \calL_T^0 (r\, |\, f ) = \lb r_T ,f_T\rb_\psi -  \lb r_0 ,f_0\rb_\psi - \int_0^T \lb r_t , \lb \partial_t + L_0\rb f_t\rb_\psi \dd t . 
 \ee
 We have to check the following: If 
 \be{eq:AT-not-1D} 
 \calA_T^0 (r ) := \sup_{f\in \sfC^{2,1}} \lbr  \calL_T^0 (r\, |\, f ) - \frac{1}{2} \int_0^T \lb r_t , \lb\partial_\theta f_t\rb^2\rb_\psi\dd t\rbr <\infty  , 
 \ee
 or alternatively, if $ \calL_T^0 (r\, |\, f ) = \int_0^T \lb r_t , b_t\partial_\theta f\rb_\psi\dd t$ for some $b\in \bbL_2 \lb \bbR\times [0,T], r_t {\rm e}^{-2\psi} \dd \theta \dd t\rb$, 
 then one can find a sequence $\lbr b^\epsilon_t (\theta )\rbr$ which complies with \eqref{eq:f-bound-new}, and 
 and in addition, 
 the unique classical solution $r^\epsilon $ of 
 \be{eq:r-epsilon} 
 \partial_t r^\epsilon_t - L_0 r^\epsilon_t = - {\rm e}^{2\psi }\partial_\theta \lb {\rm e}^{-2\psi }b_t^\epsilon r_t^\epsilon\rb 
 \ee
 with a nice initial profile $r_0^\epsilon$,  which satisfies 
 \be{eq:Cond-r-epsilon-red} 
 \forall \, T\ \lim_{\epsilon\to 0} \max_{t\leq T} 
 \dd_{\mathsf{LP}} \lb  r^{\epsilon},  r\rb = 0 \quad\text{and}\quad 
 \lim_{\epsilon\to 0} = \int_0^T \lb r_t^\epsilon  , \lb b_t^\epsilon \rb^2\rb_\psi 
 \dd t = \int_0^T \lb r_t , \lb b_t^0 \rb^2 \rb_\psi \dd t.
 \ee


\end{document}